\theoremstyle{plain}
\newtheorem{theorem}{Theorem}[section]
\newtheorem*{theorem-nn}{Theorem}
\newtheorem{lemma}[theorem]{Lemma}
\newtheorem*{proposition-nn}{Proposition}
\theoremstyle{definition}
\newtheorem{definition}[theorem]{Definition}
\newtheorem{algorithm}[theorem]{Algorithm}
\newtheorem{example}[theorem]{Example}
\newtheorem{remark}[theorem]{Remark}
\newtheorem*{acknowledgments}{Acknowledgments}
\theoremstyle{remark}
\newcommand{\bZ}{\mathbbm{Z}}
\newcommand{\bG}{\mathbbm{G}}
\newcommand{\bF}{\mathbbm{F}}
\newcommand{\bP}{\mathbbm{P}}
\newcommand{\cC}{\mathcal{C}}\newcommand{\cD}{\mathcal{D}}
\newcommand{\cH}{\mathcal{H}}\newcommand{\cS}{\mathcal{S}}
\newcommand{\GL}{{\rm GL}}
\newcommand{\PGL}{{\rm PGL}}\newcommand{\PSL}{{\rm PSL}}
\newcounter{sub}
{\begin{list}{(\arabic{sub})}{\usecounter{sub}%
\setlength{\leftmargin}{2em}}}{\end{list}}
\newcommand{\Aut}{\mathrm{Aut}}
\title[Rationality problem for norm one tori in small dimensions]
{Rationality problem for norm one tori in small dimensions}
\author[S. Hasegawa]{Sumito Hasegawa}
\address{Graduate School of Science and Technology, Niigata University, Niigata 950-2181, Japan}
\email{shasegawa@m.sc.niigata-u.ac.jp}
\author[A. Hoshi]{Akinari Hoshi}
\address{Department of Mathematics, Niigata University, Niigata 950-2181, Japan}
\email{hoshi@math.sc.niigata-u.ac.jp}
\author[A. Yamasaki]{Aiichi Yamasaki}
\address{Department of Mathematics, Kyoto University, Kyoto 606-8502, Japan}
\email{aiichi.yamasaki@gmail.com}
\thanks{{\it Key words and phrases.} Rationality problem, 
algebraic tori, 
stably rational, retract rational, flabby resolution.\\ 
This work was partially supported by JSPS KAKENHI Grant Numbers 
25400027, 16K05059, 19K03418.
%Some part of this work was done during the authors' visit to 
%National Taiwan University, the National Center for Theoretic Sciences 
%(Taipei Office), whose support is gratefully acknowledged.
}
\subjclass[2010]{Primary 11E72, 12F20, 13A50, 14E08, 20C10, 20G15.}
\begin{document}
\begin{abstract}
We classify stably/retract rational norm one tori 
in dimension $n-1$ 
for $n=2^e$ $(e\geq 1)$ is a power of $2$ and $n=12, 14, 15$. 
Retract non-rationality of norm one tori 
for primitive $G\leq S_{2p}$ where $p$ is a prime number 
and for the five Mathieu groups $M_n\leq S_n$ 
$(n=11,12,22,23,24)$ is also given.
\end{abstract}

\maketitle

%\tableofcontents

%%%%%%%%%%%%%%%%%%%%%%%%%%%%%%%%%%%%%%%%%%%%%%%%%%%%%%%%
\section{Introduction}\label{seInt}

Let $L$ be a finite Galois extension of a field $k$ 
and $G={\rm Gal}(L/k)$ be the Galois group of the extension $L/k$. 
Let $M=\bigoplus_{1\leq i\leq n}\bZ\cdot u_i$ be a $G$-lattice with 
a $\bZ$-basis $\{u_1,\ldots,u_n\}$, 
i.e. finitely generated $\bZ[G]$-module 
which is $\bZ$-free as an abelian group. 
Let $G$ act on the rational function field $L(x_1,\ldots,x_n)$ 
over $L$ with $n$ variables $x_1,\ldots,x_n$ by 
\begin{align}
\sigma(x_i)=\prod_{j=1}^n x_j^{a_{i,j}},\quad 1\leq i\leq n\label{acts}
\end{align}
for any $\sigma\in G$, when $\sigma (u_i)=\sum_{j=1}^n a_{i,j} u_j$, 
$a_{i,j}\in\bZ$. 
The field $L(x_1,\ldots,x_n)$ with this action of $G$ will be denoted 
by $L(M)$.
%An algebraic torus $T$ over $k$ is a group $k$-scheme 
%such that $T\otimes_k \overline{k}\simeq \bG_{m,\overline{k}}^n$. 
There is the duality between the category of $G$-lattices 
and the category of algebraic $k$-tori which split over $L$ 
(see \cite[Section 1.2]{Ono61}, \cite[page 27, Example 6]{Vos98}). 
%A torus $T$ corresponds in this duality to the dual $X(T)^\circ$ 
%of the character group $X(T)={\rm Hom}(T,\bG_m)$. 
In fact, if $T$ is an algebraic $k$-torus, then the character 
group $X(T)={\rm Hom}(T,\bG_m)$ of $T$ may be regarded as a $G$-lattice. 
Conversely, for a given $G$-lattice $M$, there exists an algebraic $k$-torus 
$T$ which splits over $L$ such that $X(T)$ is isomorphic to $M$ 
as a $G$-lattice. 

The invariant field $L(M)^G$ of $L(M)$ under the action of $G$ 
may be identified with the function field of the algebraic $k$-torus $T$. 
Note that the field $L(M)^G$ is always $k$-unirational 
(see \cite[page 40, Example 21]{Vos98}).
Tori of dimension $n$ over $k$ correspond bijectively 
to the elements of the set $H^1(\mathcal{G},\GL_n(\bZ))$ 
where $\mathcal{G}={\rm Gal}(k_{\rm s}/k)$ since 
${\rm Aut}(\bG_m^n)=\GL_n(\bZ)$. 
The $k$-torus $T$ of dimension $n$ is determined uniquely by the integral 
representation $h : \mathcal{G}\rightarrow \GL_n(\bZ)$ up to conjugacy, 
and the group $h(\mathcal{G})$ is a finite subgroup of $\GL_n(\bZ)$ 
(see \cite[page 57, Section 4.9]{Vos98})). 

Let $K/k$ be a separable field extension of degree $n$ 
and $L/k$ be the Galois closure of $K/k$. 
Let $G={\rm Gal}(L/k)$ and $H={\rm Gal}(L/K)$. 
The Galois group $G$ may be regarded as a transitive subgroup of 
the symmetric group $S_n$ of degree $n$. 
Let $R^{(1)}_{K/k}(\bG_m)$ be the norm one torus of $K/k$,
i.e. the kernel of the norm map $R_{K/k}(\bG_m)\rightarrow \bG_m$ where 
$R_{K/k}$ is the Weil restriction (see \cite[page 37, Section 3.12]{Vos98}). 
The norm one torus $R^{(1)}_{K/k}(\bG_m)$ has the 
Chevalley module $J_{G/H}$ as its character module 
and the field $L(J_{G/H})^G$ as its function field 
where $J_{G/H}=(I_{G/H})^\circ={\rm Hom}_\bZ(I_{G/H},\bZ)$ 
is the dual lattice of $I_{G/H}={\rm Ker}\ \varepsilon$ and 
$\varepsilon : \bZ[G/H]\rightarrow \bZ$ is the augmentation map 
(see \cite[Section 4.8]{Vos98}). 
We have the exact sequence $0\rightarrow \bZ\rightarrow \bZ[G/H]
\rightarrow J_{G/H}\rightarrow 0$ and ${\rm rank}_\bZ(J_{G/H})=n-1$. 
Write $J_{G/H}=\oplus_{1\leq i\leq n-1}\bZ x_i$. 
Then the action of $G$ on $L(J_{G/H})=L(x_1,\ldots,x_{n-1})$ is 
of the form (\ref{acts}). 

%%%%%%%%%%%%%%%%%%%%%%%%%%%%%%%%%%%%%%%%%%%%%%%%%%%%%%

Let %$k$ be a field and 
$K$ %and $K^\prime$ 
be a finitely generated field extension of a field $k$. 
A field $K$ is called {\it rational over $k$} 
(or {\it $k$-rational} for short) 
if $K$ is purely transcendental over $k$, 
i.e. $K$ is isomorphic to $k(x_1,\ldots,x_n)$, 
the rational function field over $k$ with $n$ variables $x_1,\ldots,x_n$ 
for some integer $n$. 
$K$ is called {\it stably $k$-rational} 
if $K(y_1,\ldots,y_m)$ is $k$-rational for some algebraically 
independent elements $y_1,\ldots,y_m$ over $K$. 
Two fields 
$K$ and $K^\prime$ are called {\it stably $k$-isomorphic} if 
$K(y_1,\ldots,y_m)\simeq K^\prime(z_1,\ldots,z_n)$ over $k$ 
for some algebraically independent elements $y_1,\ldots,y_m$ over $K$ 
and $z_1,\ldots,z_n$ over $K^\prime$. 
When $k$ is an infinite field, 
%%%
$K$ is called {\it retract $k$-rational} 
if there is a $k$-algebra $R$ contained in $K$ such that 
(i) $K$ is the quotient field of $R$, and (ii) 
the identity map $1_R : R\rightarrow R$ factors through a localized 
polynomial ring over $k$, i.e. there is an element $f\in k[x_1,\ldots,x_n]$, 
which is the polynomial ring over $k$, and there are $k$-algebra 
homomorphisms $\varphi : R\rightarrow k[x_1,\ldots,x_n][1/f]$ 
and $\psi : k[x_1,\ldots,x_n][1/f]\rightarrow R$ satisfying 
$\psi\circ\varphi=1_R$ (cf. \cite{Sal84}). 
$K$ is called {\it $k$-unirational} 
if $k\subset K\subset k(x_1,\ldots,x_n)$ for some integer $n$. 
It is not difficult to see that 
``$k$-rational'' $\Rightarrow$ ``stably $k$-rational'' $\Rightarrow$ 
``retract $k$-rational'' $\Rightarrow$ ``$k$-unirational''. 

%%%%%%%%%%%%%%%%%%%%%
The $1$-dimensional algebraic $k$-tori, 
i.e. the trivial torus $\bG_m$ and the norm one torus $R_{K/k}^{(1)}(\bG_m)$ 
with $[K:k]=2$, are $k$-rational. 
%There are $13$ (resp. $73$, $710$, $6079$) $\bZ$-classes forming 
%$10$ (resp. $32$, $227$, $955$) $\bQ$-classes 
%in $\GL_2(\bZ)$ (resp. $\GL_3(\bZ)$, $\GL_4(\bZ)$, $\GL_5(\bZ)$).
Voskresenskii \cite{Vos67} showed that 
all the $2$-dimensional algebraic $k$-tori are $k$-rational. 
Kunyavskii \cite{Kun90} classifies 
rational (resp. stably rational, retract rational) algebraic $k$-tori 
in dimension $3$. 
Hoshi and Yamasaki \cite[Theorem 1.9, Theorem 1.12]{HY17} 
classify stably rational (resp. retract rational) 
algebraic $k$-tori in dimensions $4$ and $5$. 

%%%%
%Let $T=R^{(1)}_{K/k}(\bG_m)$ be the norm one torus defined by $K/k$. 
Let $S_n$ (resp. $A_n$, $D_n$, $C_n$) be the symmetric 
(resp. the alternating, the dihedral, the cyclic) group 
of degree $n$ of order $n!$ (resp. $n!/2$, $2n$, $n$). % $n$).
Let $F_{pm}\simeq C_p\rtimes C_m\leq S_p$ 
be the Frobenius group of order $pm$ where $m\mid p-1$.
%%%%%%%%%%%%%%%%%%%
Let $nTm$ be the $m$-th transitive subgroup of $S_n$ 
%There exist $2$ (resp. $5$, $5$, $16$, $7$, $50$, $34$, $45$, $8$) 
%transitive subgroups of $S_3$ (resp. $S_4$, $S_5$, $S_6$, $S_7$, 
%$S_8$, $S_9$, $S_{10}$, $S_{11}$) 
(see Butler and McKay \cite{BM83} for $n\leq 11$, 
Royle \cite{Roy87} for $n=12$, 
Butler \cite{But93} for $n=14,15$ 
and \cite{GAP}).
%

%%%%%%%%%%%%%%%%%%%%%%%%%%%%%%%%%%%%%%%%%%%%%%%%%%
The rationality problem for norm one tori 
$R^{(1)}_{K/k}(\bG_m)$ 
is investigated 
by \cite{EM75}, \cite{CTS77}, \cite{Hur84}, \cite{CTS87}, 
\cite{LeB95}, \cite{CK00}, \cite{LL00}, \cite{Flo}, \cite{End11}, 
\cite{HY17} and \cite{HY}. 
In the previous papers \cite{HY17} and \cite{HY}, 
a classification of stably/retract rational
norm one tori $R^{(1)}_{K/k}(\bG_m)$ in dimension $p-1$ 
where $p$ is a prime number 
and in dimension $n\leq 10$ is given 
except for the following three cases: 
(i) $G=\PSL_2(\bF_{2^e})$ where 
$p=2^e+1\geq 17$ is a Fermat prime; 
(ii) $G=9T27\simeq \PSL_2(\bF_8)$; 
(iii) $G=10T11\simeq A_5\times C_2$. 

%The following three theorems are main results of this paper. 
The first main results of this paper are 
Theorem \ref{thmain1} and Theorem \ref{thmain2} 
which classify stably/retract rational norm one tori 
$R^{(1)}_{K/k}(\bG_m)$ in dimension $n-1$ 
for $n=2^e$ $(e\geq 1)$ and $n=10,12,14,15$. 
Note that there exist $45$ (resp. $301$, $63$, $104$) transitive groups $10Tm$ 
(resp. $12Tm$, $14Tm$, $15Tm$) of degree $10$ (resp. $12$, $14$, $15$). 
The case $n=10$ in Theorem \ref{thmain2} (1) 
was solved by \cite[Theorem 1.11]{HY} except for 
$G=10T11\simeq A_5\times C_2$. 

%%%%%%%%%%%%%%%%%%%%%%%%%%%%
\begin{theorem}\label{thmain1}
Let $K/k$ be a separable field extension of degree $n$ 
and $L/k$ be the Galois closure of $K/k$. 
%Assume that 
Let $G={\rm Gal}(L/k)$ be a transitive subgroup of $S_n$ 
where $n=2^e$ $(e\geq 1)$ 
and $H={\rm Gal}(L/K)$ with $[G:H]=n$. 
%is the stabilizer of one of the letters in $G$. 
Then $R_{K/k}^{(1)}(\bG_m)$ is stably $k$-rational 
if and only if $G\simeq C_n$. 
Moreover, if $R_{K/k}^{(1)}(\bG_m)$ is not stably $k$-rational, 
then it is not retract $k$-rational. 
\end{theorem}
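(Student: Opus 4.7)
The plan is to translate the problem into the language of flabby classes and then reduce to the Sylow 2-subgroup. By the Endo--Miyata / Colliot-Th\'el\`ene--Sansuc / Saltman dictionary, $R^{(1)}_{K/k}(\bG_m)$ is stably (resp.\ retract) $k$-rational if and only if the flabby class $[J_{G/H}]^{\mathrm{fl}}$ is zero (resp.\ invertible) in the class monoid of $G$-lattices modulo permutation lattices, and this property is detected by restriction to Sylow subgroups. Since $[G:H]=n=2^e$ has no odd prime divisor, for every odd prime $p$ a Sylow $p$-subgroup of $G$ lies (after conjugation) inside $H$, so the corresponding restriction vanishes. Only the prime $p=2$ remains to be checked.

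Fix a Sylow 2-subgroup $G_2\leq G$. By Wielandt's theorem, a transitive permutation group of prime-power degree has a transitive Sylow subgroup for that prime, so $G_2$ acts transitively on $G/H$ with point stabilizer $H_2:=G_2\cap H$, and therefore $J_{G/H}|_{G_2}\cong J_{G_2/H_2}$. The embedding $G\hookrightarrow S_n$ restricted to $G_2$ is still faithful, so $\mathrm{core}_{G_2}(H_2)=1$. At this point I invoke the known 2-group classification (Endo \cite{End11}, and also \cite{HY17}, \cite{HY}): for a finite 2-group $P$ and a subgroup $Q$ with $\mathrm{core}_P(Q)=1$, $[J_{P/Q}]^{\mathrm{fl}}$ is invertible if and only if $P$ is cyclic. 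Applied to $(G_2,H_2)$: if $R^{(1)}_{K/k}(\bG_m)$ is retract $k$-rational, then $G_2$ is cyclic; since every subgroup of a cyclic group is normal, $H_2=\mathrm{core}_{G_2}(H_2)=1$, and hence $G_2\cong C_{2^e}$.

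Finally I lift the conclusion from $G_2$ to $G$. Because $G_2$ is cyclic and $2$ is the smallest prime dividing $|G|$, Burnside's normal $p$-complement theorem produces a normal odd-order subgroup $N\trianglelefteq G$ with $G=N\rtimes G_2$. Each $N$-orbit on $G/H$ has odd size yet divides $|G/H|=2^e$, so every $N$-orbit is a singleton; faithfulness of the $G$-action then forces $N=1$, that is, $G=G_2\cong C_n$. Conversely, for $G\cong C_n$ one has $H=1$ and $K/k$ is cyclic of prime-power degree $2^e$, so $R^{(1)}_{K/k}(\bG_m)$ is stably $k$-rational by the Endo--Miyata theorem on norm one tori of cyclic prime-power extensions. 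The main technical input, and hence the potential obstacle, is the 2-group classification invoked in the second paragraph; once granted, the remainder is a short Wielandt--Burnside argument.
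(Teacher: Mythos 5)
Your proposal is correct, and it follows the paper's overall skeleton --- reduce to the Sylow $2$-subgroup $G_2$, which is transitive by the easy index argument (the paper's Theorem \ref{th1}, your ``Wielandt''), identify $J_{G/H}|_{G_2}\cong J_{G_2/H_2}$ with $\mathrm{core}_{G_2}(H_2)=1$, invoke Endo--Miyata \cite[Theorem 1.5]{EM75} and Endo \cite[Theorem 2.1]{End11} to conclude that the flabby class of a $2$-group Chevalley module with trivially-cored stabilizer is invertible only in the cyclic (hence Galois) case, pull the conclusion back via Lemma \ref{lemp3} (ii), and settle the cyclic case by \cite[Theorem 2.3]{EM75}. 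Where you genuinely diverge is the step ``$G_2\simeq C_n\Rightarrow G\simeq C_n$'': the paper devotes Theorem \ref{th2} to this, via an induction on $e$ that computes $N_{S_n}(G_2)=Z_{S_n}(G_2)=G_2$ inside $G$, counts the elements of order $n$ to show they are exactly the odd permutations of $G$, and uses a parity argument on $H$-orbits. You instead quote the classical consequence of Burnside's normal $p$-complement theorem (a cyclic Sylow $2$-subgroup forces a normal $2$-complement $N$ of odd order) and then kill $N$ by the block-system argument: the $N$-orbits all have a common size dividing both $|N|$ and $2^e$, hence $N$ acts trivially and faithfulness gives $N=1$. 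Your route is considerably shorter and rests on a standard transfer-theoretic theorem; the paper's is longer but entirely self-contained and elementary. Two harmless inaccuracies to flag: the blanket assertion that stable rationality ``is detected by restriction to Sylow subgroups'' is false in that direction (only invertibility of the flabby class, i.e.\ retract rationality, enjoys Sylow detection), but you never use it that way since the cyclic case is handled directly by \cite[Theorem 2.3]{EM75}; and the paragraph about odd-prime Sylow subgroups lying in $H$ is true but plays no role in your argument.
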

\begin{theorem}\label{thmain2}
Let $K/k$ be a separable field extension of degree $n$ 
and $L/k$ be the Galois closure of $K/k$. 
%Assume that 
Let $G={\rm Gal}(L/k)$ be a transitive subgroup of $S_n$ 
%which acts on $L(x_1,x_2,x_3,x_4)$ via $(\ref{acts})$, 
and $H={\rm Gal}(L/K)$ with $[G:H]=n$.
%is the stabilizer of one of the letters in $G$. 
Then %a birational 
a classification of %the 
stably/retract rational norm one tori 
$T=R_{K/k}^{(1)}(\bG_m)$ in dimension $n-1$ 
for $n=10,12,14,15$ is given as follows:\\
{\rm (1)} The case $10Tm$ $(1\leq m\leq 45)$.\\
{\rm (i)} $T$ is stably $k$-rational for 
$10T1\simeq C_{10}$, $10T2\simeq D_5$, $10T3\simeq D_{10}$, 
$10T11\simeq A_5\times C_2$;\\ 
{\rm (ii)} $T$ is not stably but retract $k$-rational for 
$10T4\simeq F_{20}$, $10T5\simeq F_{20}\times C_2$, 
$10T12\simeq S_5$, $10T22\simeq S_5\times C_2$;\\
{\rm (iii)} $T$ is not retract $k$-rational for $10Tm$ with 
$6\leq m\leq 45\ \textrm{and}\ m\neq 11,12,22$.\\
{\rm (2)} The case $12Tm$ $(1\leq m\leq 301)$.\\
{\rm (i)} $T$ is stably $k$-rational for $12T1 \simeq C_{12}$, $12T5 \simeq C_3 \rtimes C_4$, $12T11 \simeq C_4 \times S_3$;\\
{\rm (ii)} $T$ is not retract $k$-rational for $12Tm$ with $1\leq m\leq 301$ and $m \not = 1,5,11$.\\
{\rm (3)} The case $14Tm$ $(1\leq m\leq 63)$.\\
{\rm (i)} $T$ is stably $k$-rational for $14T1 \simeq C_{14}$, $14T2 \simeq D_{7}$, $14T3 \simeq D_{14}$;\\
{\rm (ii)} $T$ is not stably $k$-rational but retract $k$-rational for $14T4 \simeq F_{42}$, $14T5 \simeq F_{21} \times C_2$, $14T7 \simeq F_{42} \times C_2$, $14T16 \simeq \PSL_3(\bF_2) \rtimes C_2$, $14T19 \simeq  \PSL_3(\bF_2) \times C_2$, $14T46 \simeq S_{7}$, $14T47 \simeq A_{7} \times C_2$, $14T49 \simeq S_{7} \times C_2$;\\
{\rm (iii)} $T$ is not retract $k$-rational for $14Tm$ with $6 \leq m \leq 63$ and $m \not = 7, 16,19,46,47,49$.\\
{\rm (4)} The case $15Tm$ $(1\leq m \leq 104)$.\\
{\rm (i)} $T$ is stably $k$-rational for $15T1 \simeq C_{15}$, $15T2 \simeq D_{15}$, $15T3 \simeq D_5 \times C_3$, $15T4 \simeq S_3 \times C_5$, $15T5 \simeq A_5$, $15T7 \simeq D_5 \times S_3$ $15T16 \simeq A_5 \times C_3 \simeq \GL_2(\bF_4)$, $15T23 \simeq A_5 \times S_3$;\\
{\rm (ii)} $T$ is not stably $k$-rational but retract $k$-rational for $15T6 \simeq C_{15} \rtimes C_4$, $15T8 \simeq F_{20} \times C_3$, $15T10 \simeq S_5$, $15T11 \simeq F_{20} \times S_3$, $15T22 \simeq (A_5 \times C_3) \rtimes C_2 \simeq \GL_2(\bF_4) \rtimes C_2$, $15T24 \simeq S_5 \times C_3$, $15T29 \simeq S_5 \times S_3$;\\
{\rm (iii)} $T$ is not retract $k$-rational for $15Tm$ with $9 \leq m \le104$ and $m \not = 10,11,16,22,23,24,29$.
%{\rm (4)} The case $16Tm$ $(1\leq m\leq 1954)$.\\
%{\rm (i)} $T$ is stably $k$-rational for $16T1 \simeq C_{16}$;\\
%{\rm (ii)} $T$ is not retract $k$-rational for $16Tm$ with 
%$2 \leq m \leq 1954$.
\end{theorem}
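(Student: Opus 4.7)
The plan is to apply the flabby resolution formalism of Endo--Miyata, Colliot-Th\'el\`ene--Sansuc, and Saltman: for each $G \leq S_n$ and point stabilizer $H$, pick a flabby resolution $0 \to J_{G/H} \to P \to F \to 0$ with $P$ a permutation $G$-lattice; then $T = R^{(1)}_{K/k}(\bG_m)$ is stably $k$-rational iff $F$ is stably permutation, and retract $k$-rational iff $F$ is invertible (a direct summand of a permutation lattice). This recasts each assertion as a lattice-theoretic question about $J_{G/H}$ with $G = nTm$ running over the transitive subgroups catalogued in GAP's library. As noted in the remark preceding the theorem, only $10T11 \simeq A_5 \times C_2$ remains open from earlier work for $n = 10$, while the $301$, $63$, and $104$ cases for $n = 12, 14, 15$ must be handled in full.

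I would proceed case by case using the GAP-based computational framework developed by the authors in \cite{HY17} and \cite{HY}. For each $G = nTm$, build the Chevalley module $J_{G/H}$ from the augmentation sequence $0 \to \bZ \to \bZ[G/H] \to I_{G/H} \to 0$ (and dualize), compute a flabby resolution, and test in order: (a) whether $F$ is stably permutation (stably rational), (b) whether $F$ is invertible (retract rational), and (c) whether some Sylow subgroup of $G$ provides a cohomological obstruction to invertibility of $F$ (retract non-rational). For the positive items (i) and (ii), I would exhibit an explicit stable isomorphism $F \oplus P_1 \cong P_2$ (respectively, an invertible summand decomposition $F \oplus F' \cong P$), often exploiting the concrete product structure of the group (e.g., $15T23 \simeq A_5 \times S_3$, $14T19 \simeq \PSL_3(\bF_2) \times C_2$, $12T11 \simeq C_4 \times S_3$), where the Chevalley module splits naturally along the factors. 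To separate retract rational from stably rational in (ii), I would produce a nonvanishing Tate cohomology class $H^{-1}(G', F) \neq 0$ for some subgroup $G' \leq G$. For (iii), the Sylow restriction principle reduces the obstruction to a Sylow $p$-subgroup $G_p$ of $G$: it suffices to find a bicyclic subgroup of $G_p$ on which $J_{G/H}$ has nontrivial Tate cohomology in the appropriate degree, certifying that $F$ is not invertible.

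The principal obstacle is computational scale, especially for $n = 12$ with its $301$ transitive subgroups, several of which are very large (among them the Mathieu group $M_{12}$). A naive flabby-resolution computation becomes intractable for such groups, so I anticipate needing to exploit the functoriality of the flabby class under restriction to subgroups and passage to quotients, reusing resolutions across related entries in the library. For the large retract non-rational cases it should suffice to exhibit the Sylow-local obstruction without ever computing the full flabby resolution of $J_{G/H}$; conversely, for the handful of positive cases the explicit splittings produced by GAP must be certified by hand, and identifying the correct group-theoretic decomposition underlying each such splitting will be the most delicate portion of the argument.
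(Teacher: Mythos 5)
Your overall framework (flabby resolutions, the Endo--Miyata/Voskresenskii/Saltman dictionary, GAP computations on the transitive-group library, and functoriality of the flabby class under restriction to subgroups) is the same as the paper's. But one step of your plan would fail outright: to separate the retract-rational-but-not-stably-rational cases you propose to ``produce a nonvanishing Tate cohomology class $\widehat H^{-1}(G',F)\neq 0$ for some subgroup $G'\leq G$.'' This is impossible, because $F=[J_{G/H}]^{fl}$ is flabby by construction, so $\widehat H^{-1}(G',F)=0$ for \emph{every} subgroup $G'$; and in precisely these cases $F$ is invertible, hence also coflabby, so $H^{1}(G',F)=0$ as well. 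Tate cohomology in degrees $\pm 1$ cannot distinguish an invertible flabby class from a stably permutation one, so your criterion is vacuous and gives no proof that $F\neq 0$. The paper instead certifies $F\neq 0$ either by Endo's theorems (\cite{End11}, for the metacyclic and Frobenius-type cases), or by the rank-counting test {\tt PossibilityOfStablyPermutationF}: comparing $\bZ$-ranks of fixed sublattices over all conjugacy classes of subgroups shows that every relation $\bigoplus_i\bZ[G/H_i]^{\oplus a_i}\simeq F^{\oplus(-b_1)}$ forces $b_1$ even, which is incompatible with $F$ being stably permutation; non-triviality is then propagated to larger groups via Lemma \ref{lemp3}~(i) and the subgroup inclusions (e.g.\ $14T4\leq 14T7,14T46$, $15T6\leq 15T11,15T22,15T29$).

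A secondary issue: for the retract non-rational cases (iii) you reduce to a Sylow $p$-subgroup $G_p$ and look for a bicyclic subgroup with nontrivial Tate cohomology. For composite $n=12,14,15$ the Sylow subgroups of a transitive $G\leq S_n$ need not be transitive, so you must be careful that Lemma \ref{lemp3} is applied to the restriction of the fixed lattice $J_{G/H}$ rather than to a smaller norm-one lattice; moreover the bicyclic cohomological obstruction (essentially $\Sha^2_\omega$) is only a \emph{sufficient} condition for non-invertibility, and you give no argument that it detects all of the hundreds of cases claimed. The paper avoids this by reducing along the partial order of transitive subgroups to a short list of minimal groups ($I_{12}$, $I_{14}$, $I_{15}$) and running the full invertibility test {\tt IsInvertibleF} there, which is a complete decision procedure. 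Your positive-case strategy (explicit stable isomorphisms along product decompositions) is a legitimate alternative to the paper's iterated flabby resolution $[[F]^{fl}]^{fl}=0$, but the degree-$(-1)$ cohomology step must be replaced before the proof can go through.
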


The second main result of this paper is the following:
\begin{theorem}\label{thmain3}
Let $K/k$ be a separable field extension of degree $n$ 
and $L/k$ be the Galois closure of $K/k$. 
Let $G={\rm Gal}(L/k)$ be a transitive subgroup of $S_n$ 
and $H={\rm Gal}(L/K)$ with $[G:H]=n$.
%be the stabilizer of one of the letters in $G$. 
Assume that $n=q+1$ where $q=l^e\equiv 1\pmod{4}$ 
is an odd prime power 
and 
$\PSL_2(\bF_q)\leq G\leq {\rm P\Gamma L}_2(\bF_q)\simeq 
\PGL_2(\bF_q)\rtimes C_e$. 
%{\rm Gal}(\bF_q/\bF_l)$. 
Then $R_{K/k}^{(1)}(\bG_m)$ is not retract $k$-rational. 
\end{theorem}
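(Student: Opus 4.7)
The plan is to apply the flabby-class criterion: $T = R^{(1)}_{K/k}(\bG_m)$ is retract $k$-rational if and only if the flabby class $[J_{G/H}]^{fl}$ is an invertible $G$-lattice. Since invertibility of a $G$-lattice is hereditary to subgroups (a direct summand of a permutation lattice restricts to a direct summand of a permutation lattice), it suffices to exhibit a single subgroup $V \leq G$ on which $[J_{G/H}]^{fl}|_V$ fails to be invertible, witnessed by a nonvanishing Tate cohomology class.

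I would take $V \simeq C_2 \times C_2$ inside $\PSL_2(\bF_q) \leq G$. The hypothesis $q \equiv 1 \pmod 4$ is used in two essential ways. First, the Sylow $2$-subgroup of $\PSL_2(\bF_q)$ is dihedral of order $2^{v_2(q-1)} \geq 4$, so a Klein four $V$ exists; second, every involution of $\PSL_2(\bF_q)$ has eigenvalues $\pm \sqrt{-1} \in \bF_q$ and hence fixes exactly two points of $\bP^1(\bF_q)$ (for $q \equiv 3 \pmod 4$ these eigenvalues leave $\bF_q$ and there are no rational fixed points, which is why that congruence class is excluded from the theorem).

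Next, I analyze the orbit structure of $V$ on $G/H \cong \bP^1(\bF_q)$. This identification is independent of the choice of $G$ in the interval $\PSL_2(\bF_q) \leq G \leq \mathrm{P\Gamma L}_2(\bF_q)$, because $H \cap \PSL_2(\bF_q)$ is a Borel. Since each nontrivial element of $V$ fixes exactly two points, and no point is fixed by all of $V$ (point stabilizers in $\PSL_2(\bF_q)$ are Borels whose $2$-Sylow is cyclic, hence cannot accommodate a Klein four), the orbit structure of $V$ on $\bP^1(\bF_q)$ consists of three orbits of size $2$ (one per nontrivial involution, with that involution as stabilizer) together with $(q-5)/4$ free orbits of size $4$; the integrality $4 \mid (q-5)$ is precisely the hypothesis $q \equiv 1 \pmod 4$.

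With the decomposition $\bZ[G/H]|_V \cong \bigoplus_{i=1}^{3} \bZ[V/V_i] \oplus \bZ[V]^{(q-5)/4}$ in hand, Shapiro's lemma applied to the short exact sequence $0 \to J_{G/H}^\vee \to \bZ[G/H] \to \bZ \to 0$ and the accompanying Tate cohomology long exact sequence should exhibit the needed nonzero obstruction class in an appropriate $\hat{H}^i(V, J_{G/H})$, thereby ruling out invertibility of $[J_{G/H}]^{fl}|_V$. The main obstacle is pinning down the precise cohomological invariant that detects \emph{non-invertibility} rather than merely the weaker obstruction to stable rationality, and then verifying that the chosen class is genuinely nontrivial given the specific $V$-orbit decomposition above. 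Once this is in place the conclusion is uniform across the prescribed range of $G$, since $V \leq \PSL_2(\bF_q)$ lies in the common core and restrictions of flabby resolutions remain flabby resolutions.
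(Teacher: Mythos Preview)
Your reduction to a Klein four subgroup $V\simeq C_2\times C_2\leq \PSL_2(\bF_q)$ and your orbit count on $\bP^1(\bF_q)$ (three size-$2$ orbits, one for each involution, plus $(q-5)/4$ regular orbits) are correct and coincide exactly with Steps~1--2 of the paper's proof. The gap is in the final step, and you have identified it yourself: you propose to read off non-invertibility of $[J_{G/H}]^{fl}$ from some Tate group $\widehat H^i(V,J_{G/H})$ via the sequence $0\to I_{G/H}\to\bZ[G/H]\to\bZ\to 0$, but no such group does the job. Invertibility of the flabby class is \emph{not} governed by the Tate cohomology of $J_{G/H}$ itself; what one needs is $H^1(V,F)$ for a representative $F$ of $[J_{G/H}]^{fl}$, and from the long exact sequence of a flabby resolution $0\to M\to P\to F\to 0$ this sits inside $H^2(V,M)$ only as the kernel of the map to $H^2(V,P)$, which cannot be identified without actually producing $P$.

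The paper closes this gap by an explicit, uniform construction. Using precisely the orbit picture you found, it writes down a permutation $V_4$-lattice $P^\circ$ of rank $q+5$ surjecting onto $M^\circ=I_{G/H}|_{V_4}$, checks by hand that the kernel $F^\circ$ (of rank~$5$, independent of $q$) is coflabby by verifying surjectivity of $(P^\circ)^W\to(M^\circ)^W$ for each $W\leq V_4$, and then dualises to obtain the flabby resolution $0\to M\to P\to F\to 0$. A direct Smith normal form computation on the $5\times 5$ matrices for $\sigma,\tau$ acting on $F$ gives $H^1(V_4,F)\simeq\bZ/2\bZ$; since an invertible lattice is coflabby, $F$ is not invertible, and by restriction neither is $[J_{G/H}]^{fl}$. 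The point to take away is that the ``main obstacle'' you flag is not a matter of choosing the right degree $i$: one must genuinely build a flabby resolution over $V_4$ and compute $H^1$ of its cokernel.
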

As a consequence of Theorem \ref{thmain3}, we will show 
Theorem \ref{thmain4} which gives a classification of 
stably/retract rational norm one tori $R^{(1)}_{K/k}(\bG_m)$ in 
dimension $n-1$ where $n=2p$, $p$ is a prime number and 
$G={\rm Gal}(L/k)\leq S_{2p}$ is primitive. 
\begin{theorem}\label{thmain4}
Let $p$ be a prime number, 
$K/k$ be a separable field extension of degree $2p$ 
and $L/k$ be the Galois closure of $K/k$. 
Assume that $G={\rm Gal}(L/k)$ is a primitive %transitive 
subgroup of $S_{2p}$ 
and $H={\rm Gal}(L/K)$ with $[G:H]=2p$.
%is the stabilizer of one of the letters in $G$. 
Then $R_{K/k}^{(1)}(\bG_m)$ is not retract $k$-rational. 

More precisely, $R_{K/k}^{(1)}(\bG_m)$ is 
not retract $k$-rational for the following primitive %transitive 
groups $G\leq S_{2p}$:\\ 
{\rm (i)} $G=S_{2p}$ or $G=A_{2p}\leq S_{2p}$;\\
{\rm (ii)} $G=S_5\leq S_{10}$ or $G=A_5\leq S_{10}$;\\
{\rm (iii)} $G=M_{22}\leq S_{22}$ 
or $G=\Aut(M_{22})\simeq M_{22}\rtimes C_2\leq S_{22}$ 
where $M_{22}$ is the Mathieu group of degree $22$;\\ 
{\rm (iv)} $\PSL_2(\bF_q)\leq G\leq {\rm P\Gamma L}_2(\bF_q)\simeq 
\PGL_2(\bF_q)\rtimes C_e$ %{\rm Gal}(\bF_q/\bF_l)$ 
%where $2p=q+1$ and $q=l^{2^a}$ for some odd prime number $l$ and $a\geq 1$.
where $2p=q+1$ and $q=l^e$ is an odd prime power.
\end{theorem}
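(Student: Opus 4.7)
The plan is to combine a classification of primitive permutation groups of degree $2p$ with the specialized tools already developed in the paper. First, using CFSG (via Liebeck--Praeger--Saxl, building on classical work of Wielandt on groups of degree $2p$), every primitive $G\leq S_{2p}$ for $p$ a prime falls into one of the four families (i)--(iv): the natural $A_{2p}$ or $S_{2p}$; the primitive $A_5$- or $S_5$-action on the $10$ unordered $2$-subsets of $\{1,\ldots,5\}$ (together with the $A_6\simeq \PSL_2(\bF_9)$-family on $10$ points); the primitive $M_{22}$ or $\Aut(M_{22})$ on $22$ points; or a group $\PSL_2(\bF_q)\leq G\leq {\rm P\Gamma L}_2(\bF_q)$ on $q+1=2p$ points. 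Thus the four cases are exhaustive.

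Next, I would dispose of each family using the flabby-class criterion for retract rationality: $R^{(1)}_{K/k}(\bG_m)$ is retract $k$-rational iff the flabby class $[J_{G/H}]^{fl}$ is invertible, which can be detected by the vanishing of $\widehat{H}^{-1}(G',F)$ for all bicyclic subgroups $G'\leq G$ in any flabby resolution $0\to J_{G/H}\to P\to F\to 0$. Case (iv) is precisely Theorem \ref{thmain3}; note that $q+1=2p$ with $p$ odd prime forces $q\equiv 1\pmod 4$, so (iv) automatically covers $A_6\simeq\PSL_2(\bF_9)$, $S_6\simeq\PGL_2(\bF_9)$, $M_{10}$ and ${\rm P\Gamma L}_2(\bF_9)$ in degree $10$. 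Case (ii) then follows from Theorem \ref{thmain2}(1) by identifying the primitive $A_5$- and $S_5$-actions on $10$ points with the corresponding entries in the non-retract-rational $10Tm$ list. For case (i), $G=S_{2p}$ or $A_{2p}$ with $H$ the natural point stabilizer, I would invoke the classical results on norm one tori for $S_n$ with $H=S_{n-1}$ (respectively $A_n$ with $H=A_{n-1}$), which show retract rationality holds only when $n$ is prime; since $n=2p\geq 4$ is always composite, (i) follows.

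The main obstacle is case (iii), the Mathieu groups $M_{22}$ and $\Aut(M_{22})\simeq M_{22}\rtimes C_2$ in their primitive degree-$22$ action. Here I would follow the computational strategy used throughout the paper: realize $G\leq S_{22}$ in GAP, compute the rank-$21$ Chevalley $G$-lattice $J_{G/H}$, construct a flabby resolution $0\to J_{G/H}\to P\to F\to 0$, and exhibit a small bicyclic subgroup $G'\leq G$ on which the cohomological invariant of $[J_{G/H}]^{fl}$ is nonzero. The real difficulty is computational, since $|M_{22}|=443520$; in practice I would narrow the search by first scanning Sylow subgroups and commuting pairs of prime-order elements of $M_{22}$ (using its known subgroup lattice and character table) to locate candidates $G'$ of type $C_r\times C_s$, and only then compute the restriction of $F$ to those. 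Once such a witness is produced for both $M_{22}$ and its automorphism group, (i)--(iv) combine to yield Theorem \ref{thmain4}.
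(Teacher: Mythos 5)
Your proposal follows the paper's proof essentially step for step: the Liebeck--Saxl classification gives exhaustiveness of (i)--(iv); case (i) is the classical $S_n$/$A_n$ result (Cortella--Kunyavskii, Endo); case (ii) is Theorem \ref{thmain2}(1); case (iv) is Theorem \ref{thmain3} via $q=2p-1\equiv 1\pmod{4}$ for $p\geq 3$; and case (iii) is settled exactly as in Example \ref{ex22}, by restricting $J_{G/H}$ to a small subgroup of $M_{22}$ (the paper exhibits non-invertible restrictions on subgroups of order $8$, namely $(C_2)^3$, $Q_8$, $D_4$, $C_4\times C_2$, and only needs $M_{22}$ itself since $M_{22}\leq\Aut(M_{22})$ together with Lemma \ref{lemp3}(ii)). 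One slip worth correcting: invertibility of $[J_{G/H}]^{fl}$ is \emph{not} detected by the vanishing of $\widehat H^{-1}(G',F)$ on bicyclic subgroups --- $F$ is flabby by construction, so all those groups vanish automatically; the obstruction actually used is failure of coflabbiness (e.g.\ $H^1(W,F)\neq 0$ for some subgroup $W$, as in the proof of Theorem \ref{thmain3}) or the invertibility test of \cite[Section 5.2]{HY17}, and note that some of the witnessing subgroups (such as $(C_2)^3$) are not bicyclic, so a search restricted to bicyclic subgroups is not guaranteed to suffice.
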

%%%%%%%%%%%%%%%%%%%%%%%%%%%%%%%%%%%%%%%%%

%
\begin{remark}
For the reader's convenience, 
we give a list of non-solvable primitive groups 
$G=nTm\leq S_n$ of degree $n=10, 12, 14, 15$:\\ 
(i) %$n=10$;
$10T7\simeq A_5$,
$10T13\simeq S_5$,
$10T26\simeq \PSL_2(\bF_9)\simeq A_6$,
$10T30\simeq \PGL_2(\bF_9)$,
$10T31\simeq M_{10}$,
$10T32\simeq S_6$,
$10T35\simeq {\rm P}\Gamma L_2(\bF_9)$,
$10T44\simeq A_{10}$,
$10T45\simeq S_{10}$.\\
(ii) %$n=12$; 
$12T179\simeq \PSL_2(\bF_{11})$, 
$12T218\simeq \PGL_2(\bF_{11})$, 
$12T272\simeq M_{11}$, 
$12T295\simeq M_{12}$, 
$12T300\simeq A_{12}$, 
$12T301\simeq S_{12}$.\\
(iii) %$n=14$; 
$14T30\simeq \PSL_2(\bF_{13})$, 
$14T39\simeq \PGL_2(\bF_{13})$, 
$14T62\simeq A_{14}$, 
$14T63\simeq S_{14}$.\\
(iv) %$n=15$;
$15T20\simeq A_6$, 
$15T28\simeq S_6$, 
$15T47\simeq A_7$, 
$15T72\simeq A_8\simeq \PSL_4(\bF_2)$, 
$15T103\simeq A_{15}$, 
$15T104\simeq S_{15}$.
%10
%[7,13,26,30,31,32,35,44,45]
%[ A_5(10), S_5(10d), L(10)=PSL(2,9), L(10):2=PGL(2,9), 
%M(10)=L(10)'2, S_6(10)=L(10):2, L(10).2^2=P|L(2,9), A10, S10 ]
%12
%[179,218,272,295,300,301]
%[ L(2,11), PGL(2,11), M_11(12), M(12), A12, S12 ]
%14
%[30,39,62,63]
%[ L(14)=PSL(2,13), L(14):2=PGL(2,13), A14, S14 ]
%15
%[20,28,47,72,103,104]
%[ A_6(15), S_6(15), A_7(15), L(15)=A_8(15)=PSL(4,2), A15, S15 ]
\end{remark}

We also give the following result for the five 
Mathieu groups $M_n\leq S_n$ where $n=11,12,22,23,24$:
\begin{theorem}\label{thmain5}
Let $K/k$ be a separable field extension of degree $n$ 
and $L/k$ be the Galois closure of $K/k$. 
Let $G={\rm Gal}(L/k)$ be a transitive subgroup of $S_n$ 
and $H={\rm Gal}(L/K)$ with $[G:H]=n$.
%be the stabilizer of one of the letters in $G$. 
Assume that $n=11$, $12$, $22$, $23$ or $24$ 
and $G$ is isomorphic to the Mathieu group $M_n$ 
%$M_{11}$ $($resp. $M_{12}, M_{22}, M_{23}, M_{24}$$)$ 
of degree $n$. 
Then $R_{K/k}^{(1)}(\bG_m)$ is not retract $k$-rational.
\end{theorem}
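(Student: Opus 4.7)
The strategy is to reduce each of the five cases either to a theorem already proved in this paper (or its companion \cite{HY}) or to a direct Sylow-local computation of the flabby class. I would first split according to the degree $n$. For $M_{11}\leq S_{11}$ and $M_{23}\leq S_{23}$ the degree is prime, so these fall under the classification of norm one tori in dimension $p-1$ ($p$ prime) carried out in \cite{HY}; from the tables there one reads off that both $M_{11}$ and $M_{23}$ sit among the transitive groups yielding non-retract-rational norm one tori. For $M_{12}\simeq 12T295\leq S_{12}$, the conclusion is immediate from Theorem \ref{thmain2}(2)(ii), since $295\notin\{1,5,11\}$. For $M_{22}\leq S_{22}$, primitivity of $M_{22}$ on $22=2\cdot 11$ points makes Theorem \ref{thmain4}(iii) directly applicable.

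This leaves the genuinely new case $G=M_{24}$ with $H=M_{23}$. My plan is to apply the standard Sylow reduction: retract $k$-rationality is equivalent to invertibility of the flabby class $[J_{G/H}]^{fl}$ as a $G$-lattice, and by the Colliot-Th\'el\`ene--Sansuc/Endo--Saltman machinery employed in \cite{HY17,HY}, this invertibility descends to every Sylow subgroup $G_p\leq G$. So it suffices to exhibit a prime $p$ and a Sylow $p$-subgroup $G_p$ for which $[J_{G/H}|_{G_p}]^{fl}$ is not invertible. Concretely, for the candidate primes $p\in\{2,3\}$ I would first read off the orbit decomposition of $G_p$ on the $24$-point set from the natural action of $M_{24}$ on the Steiner system $S(5,8,24)$, then run the GAP flabby-resolution algorithm used in \cite{HY17,HY} to produce an explicit $G_p$-lattice $F$ representing $[J_{G/H}|_{G_p}]^{fl}$.

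Non-invertibility of $F$ can then be certified cohomologically: it is enough to exhibit a bicyclic subgroup $B\leq G_p$ with $\hat H^{-1}(B,F)\neq 0$, since any invertible lattice has vanishing $\hat H^{-1}$ on every subgroup. A natural source of such a $B$ is the inclusion $M_{23}\hookrightarrow M_{24}$: on restricting further to a Sylow $p$-subgroup of $M_{23}$ sitting inside $G_p$, the non-retract-rationality witness already established in the $M_{23}\leq S_{23}$ case should persist, giving a concrete candidate $B$ to plug into the cohomology calculation.

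The principal obstacle is the sheer size of $M_{24}$: its Sylow $2$-subgroup already has order $2^{10}=1024$, so a naive flabby resolution is at the edge of what GAP can handle. Two features rescue the computation. First, the problem is $p$-local, so one only ever works with Sylow subgroups rather than with $G$ itself. Second, to disprove invertibility one needs only \emph{some} subgroup with nonvanishing $\hat H^{-1}$, so attention can be restricted to small bicyclic subgroups without enumerating all of them. The delicate point will be verifying that the $M_{23}$-level obstruction genuinely survives the passage to the larger Sylow subgroup of $M_{24}$; this is where the explicit cohomological calculation, modeled on the Sylow-theoretic arguments in the proof of Theorem \ref{thmain4}, carries the real weight of the proof.
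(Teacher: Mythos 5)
Your reductions for $n=11,12,22,23$ coincide with the paper's: $M_{11}$ and $M_{23}$ are covered by the prime-degree classification in \cite{HY} (the paper cites \cite[Theorem 1.9 (6)]{HY}), $M_{12}\simeq 12T295$ by Theorem \ref{thmain2} (2)(ii), and $M_{22}$ by Theorem \ref{thmain4} (iii). The issue is the case $G=M_{24}$, $H=M_{23}$, where your plan has a genuine gap: you propose to locate a bicyclic subgroup $B$ with $\widehat H^{-1}(B,F)\neq 0$ inside a Sylow $p$-subgroup of the point stabilizer $M_{23}$, on the grounds that the obstruction already established for $M_{23}\leq S_{23}$ ``should persist.'' It cannot. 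Restricting the $24$-point $G$-set to $H=M_{23}$ gives the orbits $\{24\}\sqcup\{1,\dots,23\}$, so $\bZ[G/H]|_{H}\simeq \bZ\oplus\bZ[M_{23}/M_{22}]$, and the augmentation kernel of $\bZ\oplus P\to\bZ$ is $H$-isomorphic to $P$ via $x\mapsto(-\varepsilon(x),x)$. Hence $I_{G/H}|_{H}$, and therefore its dual $J_{G/H}|_{H}$, is a \emph{permutation} $H$-lattice, and the same holds for any subgroup $B$ fixing a point of the $24$-point set. All Tate cohomology of $J_{G/H}$ vanishes on such subgroups, so no witness exists where you are looking. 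The obstruction for the degree-$23$ torus lives on the lattice $J_{M_{23}/M_{22}}$ of the natural degree-$23$ action, which is not the restriction of $J_{M_{24}/M_{23}}$. Any witness subgroup must act on the $24$ points without fixed points, and your proposal gives no candidate of that kind; the remaining suggestion (a flabby resolution over a Sylow $2$-subgroup of order $2^{10}$) is left as a computation you acknowledge may not be feasible.

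The paper's argument for $M_{24}$ is both simpler and avoids this trap: by a GAP search (Example \ref{ex22}) $M_{24}$ contains a subgroup $G'\simeq S_4$ of order $24$ acting \emph{transitively} (hence regularly) on the $24$ points. Then $J_{G/H}|_{G'}=J_{G'}$ is the Chevalley module of a Galois $S_4$-situation, and Endo--Miyata \cite[Theorem 1.5]{EM75} shows $[J_{G'}]^{fl}$ is not invertible because the $2$-Sylow subgroup of $S_4$ is not cyclic. Lemma \ref{lemp3} (ii) then forces $[J_{G/H}]^{fl}$ to be non-invertible, and Theorem \ref{thEM73} (iii) concludes. If you wanted to salvage your Sylow-local route, you would have to restrict to a fixed-point-free subgroup such as this regular $S_4$ (or a fixed-point-free $2$-subgroup thereof), at which point you have essentially rediscovered the paper's proof.
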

\medskip

We organize this paper as follows. 
In Section \ref{sePre}, we prepare some basic tools to prove 
stably/retract rationality of algebraic tori.  
In Section \ref{seProof1}, 
we will give the proof of Theorem \ref{thmain1}. 
In Section \ref{seProof2}, we will give the proof of 
Theorem \ref{thmain2}. 
Finally, we give the proof of 
Theorem  \ref{thmain3}, Theorem \ref{thmain4} 
and Theorem \ref{thmain5} in Section \ref{seProof34}. 

We note that the proofs of Theorem \ref{thmain2}, Theorem \ref{thmain4} 
and Theorem \ref{thmain5} 
are given by applying GAP algorithms 
which are available from 
{\tt https://www.math.kyoto-u.ac.jp/\~{}yamasaki/Algorithm/RatProbNorm1Tori/}
although the proofs of Theorem \ref{thmain1} and 
Theorem \ref{thmain3} are given by purely algebraic way.
\begin{acknowledgments}
The authors would like to thank 
Ming-chang Kang and Shizuo Endo 
for giving them useful and valuable comments. 
They also thank the referee and the editor 
for crucial advice to organize 
the paper with the aid of computer algorithms.
\end{acknowledgments}

%
%%%%%%%%%%%%%%%%%%%%%%%%%%%%%%%%%%%%%%%%%%%%%%%%%%%%%%%%%%%%%%%%%%%%%%%%
\section{Preliminaries: rationality problem for algebraic tori and flabby resolution}\label{sePre}

We recall some basic facts of the theory of flabby (flasque) $G$-lattices
(see Colliot-Th\'{e}l\`{e}ne and Sansuc \cite{CTS77}, 
Swan \cite{Swa83}, 
Voskresenskii \cite[Chapter 2]{Vos98}, 
Lorenz \cite[Chapter 2]{Lor05}, 
Swan \cite{Swa10}).

\begin{definition}
%[Permutation, stably permutation, invertible, flabby and coflabby $G$-lattices]
Let $G$ be a finite group and $M$ be a $G$-lattice 
(i.e. finitely generated $\bZ[G]$-module which is $\bZ$-free 
as an abelian group). \\
{\rm (i)} $M$ is called a {\it permutation} $G$-lattice 
if $M$ has a $\bZ$-basis permuted by $G$, 
i.e. $M\simeq \oplus_{1\leq i\leq m}\bZ[G/H_i]$ 
for some subgroups $H_1,\ldots,H_m$ of $G$.\\
{\rm (ii)} $M$ is called a {\it stably permutation} $G$-lattice 
if $M\oplus P\simeq P^\prime$ 
for some permutation $G$-lattices $P$ and $P^\prime$.\\
{\rm (iii)} $M$ is called {\it invertible} (or {\it permutation projective}) 
if it is a direct summand of a permutation $G$-lattice, 
i.e. $P\simeq M\oplus M^\prime$ for some permutation $G$-lattice 
$P$ and a $G$-lattice $M^\prime$.\\
{\rm (iv)} $M$ is called {\it flabby} (or {\it flasque}) if $\widehat H^{-1}(H,M)=0$ 
for any subgroup $H$ of $G$ where $\widehat H$ is the Tate cohomology.\\
{\rm (v)} $M$ is called {\it coflabby} (or {\it coflasque}) if $H^1(H,M)=0$
for any subgroup $H$ of $G$.
\end{definition}

\begin{lemma}[Lenstra {\cite[Propositions 1.1 and 1.2]{Len74}, see also Swan 
\cite[Section 8]{Swa83}}]\label{lemSL}
Let $E$ be an invertible $G$-lattice.\\
{\rm (i)} $E$ is flabby and coflabby.\\
{\rm (ii)} If $C$ is a coflabby $G$-lattice, then any short exact sequence
$0 \rightarrow C \rightarrow N \rightarrow E \rightarrow 0$ splits.
\end{lemma}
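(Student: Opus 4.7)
The plan is to reduce both parts to Shapiro's lemma applied to the building blocks $\bZ[G/H_i]$ that make up any permutation lattice, exploiting the fact that an invertible $G$-lattice $E$ is by definition a direct summand of some $P = \bigoplus_{i=1}^m \bZ[G/H_i]$. Both statements reduce first to the case of $P$ and then descend to $E$, because flabbiness, coflabbiness, and the relevant Ext groups all split across direct sums.

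For part (i), I would first verify the claim on a single summand $\bZ[G/H_i]$. For any subgroup $H\leq G$ and any integer $n$, the Mackey/Shapiro decomposition gives
\[
\widehat{H}^n(H,\bZ[G/H_i])\simeq\bigoplus_{g\in H\backslash G/H_i}\widehat{H}^n(H\cap gH_ig^{-1},\bZ).
\]
For any finite group $K$ acting trivially on $\bZ$, the norm $N_K$ is multiplication by $|K|$ and hence injective, so $\widehat{H}^{-1}(K,\bZ)=0$; likewise $H^1(K,\bZ)=\Hom(K,\bZ)=0$. Thus each $\bZ[G/H_i]$ is both flabby and coflabby, and these vanishings pass to the finite direct sum $P$ and then to its direct summand $E$.

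For part (ii), I would translate splitting into an Ext-vanishing statement. The extension $0\to C\to N\to E\to 0$ represents a class in $\mathrm{Ext}^1_{\bZ[G]}(E,C)$, and the sequence splits if and only if this class is zero. Writing $P=E\oplus E'$ as above, we obtain
\[
\mathrm{Ext}^1_{\bZ[G]}(P,C)\simeq\mathrm{Ext}^1_{\bZ[G]}(E,C)\oplus\mathrm{Ext}^1_{\bZ[G]}(E',C),
\]
so it suffices to show the left-hand side vanishes. By Shapiro's lemma for Ext (via $\bZ[G/H_i]=\mathrm{Ind}_{H_i}^G\bZ$ and Frobenius reciprocity), $\mathrm{Ext}^1_{\bZ[G]}(\bZ[G/H_i],C)\simeq H^1(H_i,C)$, which is zero by the coflabbiness of $C$.

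The main point to monitor is simply the choice of cohomology theory: Tate $\widehat{H}^{-1}$ on all subgroups for part (i), and ordinary $\mathrm{Ext}^1$ (equivalently $H^1$ on the stabilizers) for part (ii). Both are standard applications of Shapiro's lemma, but they must be invoked in the correct variant and against the correct argument; once the right form is in hand, the argument is essentially formal bookkeeping and no genuine computation or technical construction is required.
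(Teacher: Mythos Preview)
Your proof is correct and follows the standard argument via Shapiro's lemma; the paper itself does not supply a proof of this lemma but simply cites Lenstra \cite{Len74} and Swan \cite{Swa83}, where essentially the same reasoning appears.
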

%%%%%%%%%%%%%%%%%%%%%%%%%%

\begin{definition}[{see \cite[Section 1]{EM75}, \cite[Section 4.7]{Vos98}}]
Let $\cC(G)$ be the category of all $G$-lattices. 
Let $\cS(G)$ be the full subcategory of $\cC(G)$ of all permutation $G$-lattices 
and $\cD(G)$ be the full subcategory of $\cC(G)$ of all invertible $G$-lattices.
Let 
\begin{align*}
\cH^i(G)=\{M\in \cC(G)\mid \widehat H^i(H,M)=0\ {\rm for\ any}\ H\leq G\}\ (i=\pm 1)
\end{align*}
be the class of ``$\widehat H^i$-vanish'' $G$-lattices 
where $\widehat H^i$ is the Tate cohomology. 
Then we have the inclusions 
$\cS(G)\subset \cD(G)\subset \cH^i(G)\subset \cC(G)$ $(i=\pm 1)$. 
\end{definition}
\begin{definition}\label{defCM}
We say that two $G$-lattices $M_1$ and $M_2$ are {\it similar} 
if there exist permutation $G$-lattices $P_1$ and $P_2$ such that 
$M_1\oplus P_1\simeq M_2\oplus P_2$. 
We denote the similarity class of $M$ by $[M]$. 
The set of similarity classes $\cC(G)/\cS(G)$ becomes a 
commutative monoid 
(with respect to the sum $[M_1]+[M_2]:=[M_1\oplus M_2]$ 
and the zero $0=[P]$ where $P\in \cS(G)$). 
\end{definition}
\begin{theorem}[{Endo and Miyata \cite[Lemma 1.1]{EM75}, Colliot-Th\'el\`ene and Sansuc \cite[Lemma 3]{CTS77}, 
see also \cite[Lemma 8.5]{Swa83}, \cite[Lemma 2.6.1]{Lor05}}]\label{thEM}
For any $G$-lattice $M$,
there exists a short exact sequence of $G$-lattices
$0 \rightarrow M \rightarrow P \rightarrow F \rightarrow 0$
where $P$ is permutation and $F$ is flabby.
\end{theorem}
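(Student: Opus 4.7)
The plan is to construct, for a given $G$-lattice $M$, an embedding $M\hookrightarrow P$ into a permutation $G$-lattice whose cokernel $F=P/M$ is flabby, by dualizing a carefully chosen surjection from a permutation lattice onto the $\bZ$-dual $M^{*}=\Hom_\bZ(M,\bZ)$. The duality that makes this work is the classical fact that, for a $G$-lattice $N$, its dual $N^{*}$ is flabby if and only if $N$ is coflabby (i.e.\ $H^1(H,N)=0$ for every $H\leq G$); this follows from the perfect Tate pairing $\hat H^{-1}(H,N^{*})\times H^1(H,N)\to \hat H^0(H,\bZ)=\bZ/|H|\bZ$ for finite $H$. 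So it suffices to produce a surjection $\pi\colon Q\twoheadrightarrow M^{*}$ with $Q$ permutation and $\Ker\pi$ coflabby.

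To build $\pi$, I would run over every conjugacy class of subgroups $H\leq G$. The fixed module $(M^{*})^{H}$ is a finitely generated abelian group; choose a finite $\bZ$-generating set. For each chosen generator $\varphi\in (M^{*})^{H}$ attach the $G$-equivariant map $\bZ[G/H]\to M^{*}$ sending the coset $[H]\mapsto \varphi$; this is well-defined because $[H]$ and $\varphi$ are both $H$-fixed. Summing these maps over all chosen generators and all conjugacy classes of subgroups yields a $G$-equivariant surjection $\pi\colon Q\twoheadrightarrow M^{*}$ with $Q$ permutation, enjoying the crucial extra property that the induced map $Q^{H}\to (M^{*})^{H}$ is surjective for every $H\leq G$.

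Writing $M_{0}=\Ker\pi$, the long exact cohomology sequence attached to $0\to M_{0}\to Q\to M^{*}\to 0$ gives $Q^{H}\to (M^{*})^{H}\to H^1(H,M_{0})\to H^1(H,Q)$. The right-hand term vanishes: by Shapiro's lemma together with Mackey's double coset decomposition, $H^1(H,\bZ[G/G_{i}])$ is a finite sum of groups $H^1(F,\bZ)=\Hom(F,\bZ)$ for finite subgroups $F$, all of which are zero. Combined with the uniform surjectivity of $Q^{H}\to (M^{*})^{H}$ built into the construction, this forces $H^1(H,M_{0})=0$ for every $H\leq G$, so $M_{0}$ is coflabby. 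Dualizing $0\to M_{0}\to Q\to M^{*}\to 0$ via $\Hom_\bZ(-,\bZ)$ is exact (all terms are $\bZ$-free) and preserves the permutation property since $\bZ[G/H]^{*}\simeq \bZ[G/H]$; setting $P=Q^{*}$ and $F=M_{0}^{*}$ produces the desired short exact sequence $0\to M\to P\to F\to 0$ with $P$ permutation and $F$ flabby.

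The main obstacle is arranging $H$-invariant surjectivity simultaneously for every subgroup $H\leq G$: an arbitrary surjection from a permutation lattice onto $M^{*}$ does \emph{not} yield a coflabby kernel in general, which is precisely why $Q$ must be built so that each generator of each $(M^{*})^{H}$ is hit by a $\bZ[G/H]$-summand. Once this uniform surjectivity is in place, the flabbiness of $F$ is a purely formal consequence of the Tate/flabby--coflabby duality under $\Hom_\bZ(-,\bZ)$.
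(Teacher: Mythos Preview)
The paper does not supply its own proof of this theorem; it is stated with attribution to Endo--Miyata, Colliot-Th\'el\`ene--Sansuc, Swan, and Lorenz, and used as a black box. Your argument is correct and is essentially the standard proof appearing in those references (notably \cite[Lemma 8.5]{Swa83} and \cite[Lemma 2.6.1]{Lor05}): one constructs a surjection $Q\twoheadrightarrow M^\circ$ from a permutation lattice that is surjective on $H$-fixed points for every $H\leq G$, deduces that the kernel is coflabby from the long exact sequence and the vanishing of $H^1$ on permutation lattices, and then dualizes to obtain the flabby resolution of $M$.
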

\begin{definition}\label{defFlabby}
The exact sequence $0 \rightarrow M \rightarrow P \rightarrow F \rightarrow 0$ 
as in Theorem \ref{thEM} is called a {\it flabby resolution} of the $G$-lattice $M$.
$\rho_G(M)=[F] \in \cC(G)/\cS(G)$ is called {\it the flabby class} of $M$,
denoted by $[M]^{fl}=[F]$.
Note that $[M]^{fl}$ is well-defined: 
if $[M]=[M^\prime]$, $[M]^{fl}=[F]$ and $[M^\prime]^{fl}=[F^\prime]$
then $F \oplus P_1 \simeq F^\prime \oplus P_2$
for some permutation $G$-lattices $P_1$ and $P_2$,
and therefore $[F]=[F^\prime]$ (cf. \cite[Lemma 8.7]{Swa83}). 
We say that $[M]^{fl}$ is {\it invertible} if 
$[M]^{fl}=[E]$ for some invertible $G$-lattice $E$. 
\end{definition}

For $G$-lattice $M$, 
it is not difficult to see 
\begin{align*}
\textrm{permutation}\ \ 
\Rightarrow\ \ 
&\textrm{stably\ permutation}\ \ 
\Rightarrow\ \ 
\textrm{invertible}\ \ 
\Rightarrow\ \ 
\textrm{flabby\ and\ coflabby}\\
&\hspace*{8mm}\Downarrow\hspace*{34mm} \Downarrow\\
&\hspace*{7mm}[M]^{fl}=0\hspace*{10mm}\Rightarrow\hspace*{5mm}[M]^{fl}\ 
\textrm{is\ invertible}.
\end{align*}

The above implications in each step cannot be reversed 
(see, for example, \cite[Section 1]{HY17}). 

Let $L/k$ be a finite Galois extension with Galois group $G={\rm Gal}(L/k)$ 
and $M$ be a $G$-lattice. 
The flabby class $\rho_G(M)=[M]^{fl}$ 
plays crucial role in the rationality problem for 
$L(M)^G$ as follows (see Voskresenskii's fundamental book \cite[Section 4.6]{Vos98} and Kunyavskii \cite{Kun07}, see also e.g. Swan \cite{Swa83}, 
Kunyavskii \cite[Section 2]{Kun90}, 
Lemire, Popov and Reichstein \cite[Section 2]{LPR06}, 
Kang \cite{Kan12}, 
Yamasaki \cite{Yam12}):  
\begin{theorem}[Endo and Miyata, Voskresenskii, Saltman]\label{thEM73}
Let $L/k$ be a finite Galois extension with Galois group $G={\rm Gal}(L/k)$. 
Let $M$ and $M^\prime$ be $G$-lattices.\\
{\rm (i)} $(${\rm Endo and Miyata} \cite[Theorem 1.6]{EM73}$)$ 
$[M]^{fl}=0$ if and only if $L(M)^G$ is stably $k$-rational.\\
{\rm (ii)} $(${\rm Voskresenskii} \cite[Theorem 2]{Vos74}$)$ 
$[M]^{fl}=[M^\prime]^{fl}$ if and only if $L(M)^G$ and $L(M^\prime)^G$ 
are stably $k$-isomorphic.\\
%i.e. there exist algebraically independent 
%elements $x_1,\ldots,x_m$ over $L(M)^G$ and 
%$y_1,\ldots,y_n$ over $L(M^\prime)^G$ such that 
%$L(M)^G(x_1,\ldots,x_m)\simeq L(M^\prime)^G(y_1,\ldots,y_n)$.\\
{\rm (iii)} $(${\rm Saltman} \cite[Theorem 3.14]{Sal84}$)$ 
$[M]^{fl}$ is invertible if and only if $L(M)^G$ is 
retract $k$-rational.
\end{theorem}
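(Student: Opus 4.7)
The plan is to translate all three statements into the geometry of algebraic $k$-tori. The character-lattice functor is an anti-equivalence between the category of $G$-lattices and the category of $k$-tori split by $L$: a $G$-lattice $N$ corresponds to a torus $T_N$ with $k(T_N)=L(N)^G$, and a short exact sequence $0\to A\to B\to C\to 0$ dualizes to $1\to T_C\to T_B\to T_A\to 1$. Applied to a flabby resolution $0\to M\to P\to F\to 0$ (which exists by Theorem~\ref{thEM}), this produces a $T_F$-torsor $T_P\to T_M$ whose total space $T_P$ is quasi-trivial (a product of Weil restrictions $R_{L^{H_i}/k}(\bG_m)$) and hence $k$-rational. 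All three parts reduce to analyzing this torsor.

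For part (i), suppose $[M]^{fl}=0$, so $F\oplus Q_1\simeq Q_2$ for permutation $Q_1,Q_2$. Direct-summing the flabby resolution with $0\to 0\to Q_1\to Q_1\to 0$ gives a new exact sequence $0\to M\to P\oplus Q_1\to Q_2\to 0$ whose middle and right terms are both permutation. Dualizing produces a torsor $T_{P\oplus Q_1}\to T_M$ under the quasi-trivial torus $T_{Q_2}$. Hilbert~90 combined with Shapiro's lemma yields $H^1(k(T_M),T_{Q_2})=0$, so the generic fiber of this torsor is trivial and $T_{P\oplus Q_1}$ is $k$-birational to $T_M\times_k T_{Q_2}$. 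Both factors on the right being $k$-rational forces $T_M$ to be stably $k$-rational. For the converse, one shows that $[\cdot]^{fl}$ is an invariant of stable $k$-isomorphism classes: the no-name lemma identifies $L(M)^G(y_1,\ldots,y_m)$ with $L(M\oplus\bZ^m)^G$ (trivial action on $\bZ^m$), and $[M\oplus P']^{fl}=[M]^{fl}$ for any permutation $P'$, so stable $k$-rationality of $L(M)^G$ pins $[M]^{fl}$ to zero.

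Part (ii) is a refinement of (i). If $[M]^{fl}=[M']^{fl}$, then after enlarging the flabby quotients of the two resolutions by permutation lattices until they become isomorphic, a double application of the torsor-splitting argument of part (i) produces a field that is simultaneously a stably $k$-rational extension of both $L(M)^G$ and $L(M')^G$, yielding stable $k$-isomorphism. The converse follows from the same stable-birational invariance of $[\cdot]^{fl}$ used above, applied to both $M$ and $M'$ after adjoining the same auxiliary variables.

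For part (iii), I would invoke Saltman's criterion characterizing retract rationality through a lifting property: roughly, $k(X)$ is retract $k$-rational iff for every local $k$-algebra $R$ with residue field $K\supseteq k$, the specialization map $X(R)\to X(K)$ has image containing a non-empty Zariski open. Applied to $T_M$ and pulled back through the $k$-rational torsor $T_P\to T_M$, this reduces to whether $T_F$-torsors lift over $R\twoheadrightarrow K$, which in cohomological terms is the statement that $T_F$ is a direct summand of a quasi-trivial torus, i.e. that $F$ is invertible in $\cC(G)/\cS(G)$. The main obstacle I anticipate is precisely this last equivalence: unlike parts (i)--(ii), where a single torsor-splitting calculation suffices, the retract case requires control simultaneously across all subgroups $H\leq G$ and all residue-field extensions $K/k$, so matching Saltman's geometric lifting condition to monoid-level invertibility of $[F]$ is where the genuine work lies.
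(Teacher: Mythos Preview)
The paper does not supply its own proof of Theorem~\ref{thEM73}; the result is quoted from the literature with citations to \cite{EM73}, \cite{Vos74}, and \cite{Sal84}, so there is no argument in the paper to compare against. Your forward implications in (i) and (ii) follow the standard torsor-splitting argument and are correctly outlined: enlarging the flabby resolution so that the quotient is permutation, dualizing to a torsor under a quasi-trivial torus, and splitting it generically via Hilbert~90/Shapiro is exactly how one proves that direction.

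The genuine gap is in your converse directions. You assert that stable $k$-rationality of $L(M)^G$ ``pins $[M]^{fl}$ to zero'' because $[\,\cdot\,]^{fl}$ is a stable-birational invariant, but that invariance \emph{is} the content of (ii), so the reasoning is circular. Concretely, an isomorphism $L(M\oplus\bZ^m)^G\simeq k(x_1,\ldots,x_N)=L(\bZ^N)^G$ of $k$-algebras carries no a priori information about the $G$-lattices $M\oplus\bZ^m$ and $\bZ^N$: an abstract field isomorphism need not arise from, or even be compatible with, any lattice map. The substantive step in Voskresenskii's proof is to pass to a smooth projective compactification $X$ of the torus $T_M$ and identify $[M]^{fl}$ with the class of the Galois module ${\rm Pic}\,\overline{X}$ via the divisor sequence $0\to M\to D\to {\rm Pic}\,\overline{X}\to 0$, where $D$ is the permutation module on boundary components. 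Only then does stable-birational invariance of ${\rm Pic}$ for smooth projective varieties (modulo permutation summands) yield $[M]^{fl}=[M']^{fl}$. Your sketch omits this geometric input entirely. For (iii) you are right that Saltman's argument proceeds through the local lifting criterion, and you have correctly located where the work lies.
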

\begin{lemma}[{Swan \cite[Lemma 3.1]{Swa10}}]\label{lemSwa}
Let $0\rightarrow M_1\rightarrow M_2\rightarrow M_3\rightarrow 0$ 
be a short exact sequence of $G$-lattices with $M_3$ invertible. 
Then the flabby class $[M_2]^{fl}=[M_1]^{fl}+[M_3]^{fl}$. 
In particular, if $[M_1]^{fl}$ is invertible, 
then $-[M_1]^{fl}=[[M_1]^{fl}]^{fl}$. 
\end{lemma}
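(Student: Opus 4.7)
The plan is to invoke Saltman's criterion (Theorem \ref{thEM73}(iii)): $R_{K/k}^{(1)}(\bG_m)$ is retract $k$-rational if and only if the flabby class $[J_{G/H}]^{fl}$ is invertible in $\cC(G)/\cS(G)$. So for each of the five cases $G=M_n$ with $n\in\{11,12,22,23,24\}$ and $H$ a point stabilizer of the natural degree-$n$ action, the task reduces to certifying that $[J_{G/H}]^{fl}$ is \emph{not} invertible.

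The proof will be computational, in the same style as Theorem \ref{thmain2} and Theorem \ref{thmain4}. First, I would realize each $M_n$ as a transitive subgroup of $S_n$ via GAP's transitive groups library, choose a point stabilizer $H$, and form the rank-$(n-1)$ Chevalley lattice $J_{G/H}$ by $\bZ$-dualizing the augmentation sequence $0\to\bZ\to\bZ[G/H]\to I_{G/H}\to 0$. This yields an explicit faithful integral representation $G\to\GL_{n-1}(\bZ)$. Second, I would run the flabby-resolution algorithm (available at the URL cited in the introduction) to produce a short exact sequence $0\to J_{G/H}\to P\to F\to 0$ with $P$ permutation and $F$ flabby, thereby realizing a concrete representative $F$ of $[J_{G/H}]^{fl}$ by explicit integer matrices.

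To certify non-invertibility of $[F]$, I would search for a subgroup $G'\leq G$ on which the restriction $F|_{G'}$ fails to be coflabby. Since invertible $G$-lattices are both flabby and coflabby on every subgroup by Lemma \ref{lemSL}(i), the existence of a single $G'\leq G$ with $H^1(G',F)\neq 0$ suffices to conclude that $[F]$ is not invertible. (Note that $F$ is flabby by construction, so the $\widehat H^{-1}$ test yields no information and the coflabby test is the only one available.) Natural candidates for such $G'$ are the Sylow $2$- or $3$-subgroups of $M_n$, small elementary abelian subgroups, and small bicyclic subgroups, on which the required cohomology reduces to finite linear algebra over $\bZ$ applied to the restricted matrices of $F$.

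The main obstacle is computational scale, especially for $M_{24}$ (order $\sim 2.4\times 10^8$, acting on a rank-$23$ lattice): the permutation lattice $P$ and the flabby part $F$ produced by the algorithm will typically have rank in the low hundreds, making an exhaustive sweep over conjugacy classes of subgroups infeasible. To keep the computation tractable, I would exploit the well-documented maximal-subgroup structure of the Mathieu groups (for instance $M_{23}<M_{24}$, $M_{21}\cong L_3(\bF_4)<M_{22}$, $L_2(\bF_{11})<M_{11}$) as a guide, and restrict attention to carefully chosen small subgroups in which an explicit $H^1(G',F)$ computation is both feasible and, conjecturally on the basis of the analogous cases already handled in Theorem \ref{thmain4}, non-vanishing.
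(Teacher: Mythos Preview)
Your proposal does not address the stated lemma at all. Lemma~\ref{lemSwa} is a purely module-theoretic statement about flabby classes: given a short exact sequence $0\to M_1\to M_2\to M_3\to 0$ of $G$-lattices with $M_3$ invertible, one must show $[M_2]^{fl}=[M_1]^{fl}+[M_3]^{fl}$ in the commutative monoid $\cC(G)/\cS(G)$, and deduce the consequence about $-[M_1]^{fl}$. What you have written instead is an outline of a computational proof of Theorem~\ref{thmain5} (non-retract-rationality of norm one tori for the Mathieu groups $M_n$). None of your discussion of Saltman's criterion, Chevalley modules $J_{G/H}$, GAP computations, or subgroup searches has any bearing on the lemma as stated.

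The paper does not prove Lemma~\ref{lemSwa}; it simply cites Swan \cite[Lemma~3.1]{Swa10}. If you wish to supply a proof, the standard argument runs as follows: take a flabby resolution $0\to M_1\to P\to F_1\to 0$, form the pushout along $M_1\to M_2$ to get $0\to M_2\to P\oplus M_3'\to F_1\to 0$ (here one uses that the sequence splits after adding a permutation summand, by Lemma~\ref{lemSL}(ii) applied to the invertible $M_3$), then splice with a flabby resolution of $M_3$. The ``in particular'' clause follows by applying the first part to the flabby resolution $0\to M_1\to P\to F\to 0$ itself, where $F$ is invertible by hypothesis. You should discard your current write-up and address the actual statement.
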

\begin{definition}\label{defMG} 
Let $G$ be a finite subgroup of $\GL_n(\bZ)$. 
{\it The $G$-lattice $M_G$ with ${\rm rank}_\bZ(M_G)=n$} 
is defined to be the $G$-lattice with a $\bZ$-basis $\{u_1,\ldots,u_n\}$ 
on which $G$ acts by $\sigma(u_i)=\sum_{j=1}^n a_{i,j}u_j$ for any $
\sigma=[a_{i,j}]\in G$. 
\end{definition}
\begin{lemma}[{see \cite[Remarque R2, page 180]{CTS77}, \cite[Lemma 2.17]{HY17}}]\label{lemp3}
Let $G$ be a finite subgroup of $\GL_n(\bZ)$ 
and $M_G$ be the corresponding $G$-lattice 
as in Definition \ref{defMG}. 
Let $H\leq G$ and $\rho_H(M_H)$ be the flabby class of $M_H$ 
as an $H$-lattice.\\
{\rm (i)} If $\rho_G(M_G)=0$, then $\rho_H(M_H)=0$.\\
{\rm (ii)} If $\rho_G(M_G)$ is invertible, then $\rho_H(M_H)$ is invertible.
\end{lemma}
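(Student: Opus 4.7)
The plan is to take a flabby resolution at the level of $G$ and simply restrict it to $H$. By Theorem \ref{thEM} there exists a short exact sequence of $G$-lattices
\begin{align*}
0 \rightarrow M_G \rightarrow P \rightarrow F \rightarrow 0
\end{align*}
with $P$ permutation and $F$ flabby, so that $\rho_G(M_G)=[F]$. I would then apply the restriction functor $\mathrm{Res}^G_H$ and argue that the resulting sequence is a flabby resolution of $M_H$ as an $H$-lattice.

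The checks that make this work are the following. First, by the very definition of $M_G$ (Definition \ref{defMG}), the underlying abelian group of $M_G$ equipped with the restricted action is precisely $M_H$. Second, if $P = \bigoplus_i \bZ[G/G_i]$ is a permutation $G$-lattice, then by Mackey's formula (or directly, by decomposing each coset space $G/G_i$ into $H$-orbits) $P|_H$ is again a permutation $H$-lattice. Third, if $F$ is flabby as a $G$-lattice, i.e. $\widehat{H}^{-1}(G',F)=0$ for every $G'\leq G$, then $F|_H$ is flabby as an $H$-lattice because every subgroup of $H$ is already a subgroup of $G$. Hence the restricted sequence is a flabby resolution of $M_H$, and therefore $\rho_H(M_H) = [F|_H]$ in $\cC(H)/\cS(H)$.

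It remains to transfer the hypotheses on $[F]$ across the restriction. For (i), the assumption $\rho_G(M_G)=0$ means $F\oplus P_1 \simeq P_2$ for some permutation $G$-lattices $P_1, P_2$; restricting this isomorphism to $H$ and using that permutation lattices stay permutation under restriction yields $F|_H \oplus P_1|_H \simeq P_2|_H$, so $\rho_H(M_H)=[F|_H]=0$. For (ii), the assumption that $\rho_G(M_G)$ is invertible means there exist a $G$-lattice $F'$ and a permutation $G$-lattice $Q$ with $F \oplus F' \simeq Q$; restricting gives $F|_H \oplus F'|_H \simeq Q|_H$ with $Q|_H$ permutation, so $F|_H$ is an invertible $H$-lattice, and hence $\rho_H(M_H)$ is invertible.

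There is no real obstacle here: the whole argument reduces to verifying that restriction preserves the three classes (permutation, invertible, flabby), and all three are essentially formal. The only mildly subtle point is the passage from ``flabby'' to ``flabby after restriction,'' but as noted this is immediate because the vanishing of $\widehat H^{-1}$ is required over all subgroups of the ambient group, so shrinking the group only weakens the condition.
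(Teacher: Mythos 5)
Your proof is correct and is exactly the standard argument: the paper itself gives no proof of this lemma (it only cites \cite[Remarque R2, page 180]{CTS77} and \cite[Lemma 2.17]{HY17}), and the proof in those references is the same restriction argument you give, namely that restricting a flabby resolution of $M_G$ to $H$ yields a flabby resolution of $M_H$ because permutation, invertible, and flabby lattices are all preserved under restriction. Nothing is missing; in particular your identification of $M_G|_H$ with $M_H$ and your reduction of ``$[F]$ invertible'' to ``$F$ is a direct summand of a permutation lattice'' are both valid.
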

%

%%%%%%%%%%%%%%%%%%%%%%%%%%%%%%%%%%%%%%%%%%%%
\section{Proof of Theorem {\ref{thmain1}}}\label{seProof1}

In order to prove Theorem \ref{thmain1}, we show the following two theorems. 

\begin{theorem}\label{th1}
Let $n=p^e$ be a prime power and 
$G$ be a transitive subgroup of $S_n$. 
Let $G_p={\rm Syl}_p(G)$ be a $p$-Sylow subgroup of $G$. 
Then $G_p$ is a transitive subgroup of $S_n$. 
\end{theorem}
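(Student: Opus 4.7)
The plan is to show directly that for any $x \in \{1, \ldots, n\}$, the orbit $G_p \cdot x$ has size exactly $n = p^e$, by combining orbit-stabilizer with a $p$-divisibility comparison between $G$ and a point stabilizer.

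First I would fix $x \in \{1, \ldots, n\}$ and set $H = \mathrm{Stab}_G(x)$, so transitivity of $G$ gives $[G:H] = n = p^e$. Writing $|G| = p^a m$ with $\gcd(m,p)=1$, we must have $a \geq e$ and $|H| = p^{a-e} m$. In particular, the $p$-part of $|H|$ is exactly $p^{a-e}$, so any $p$-subgroup of $H$ has order at most $p^{a-e}$.

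Next I would apply this to $G_p \cap H$: it is a $p$-subgroup of $H$, so $|G_p \cap H| \leq p^{a-e}$. By orbit-stabilizer applied to the action of $G_p$ on $\{1,\ldots,n\}$, the orbit of $x$ has size
\begin{equation*}
|G_p \cdot x| = [G_p : G_p \cap H] \geq \frac{p^a}{p^{a-e}} = p^e.
\end{equation*}
On the other hand this orbit is contained in $\{1,\ldots,n\}$, so $|G_p \cdot x| \leq p^e$. Hence $|G_p \cdot x| = p^e = n$, proving that $G_p$ is transitive.

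There is no real obstacle here; the only subtle point is extracting the correct $p$-part of $|H|$ from the hypothesis $[G:H] = p^e$ and matching it against Sylow's theorem inside $H$. The argument does not use the prime power form of $n$ beyond the equality $[G:H] = p^e$, which is precisely what forces the $p$-deficit of $H$ to be exactly $e$ and thereby pins down the orbit size.
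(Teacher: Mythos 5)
Your proof is correct and follows essentially the same route as the paper: both arguments come down to the observation that the $p$-part of the point stabilizer $H$ is exactly $p^{a-e}$ when $|G|=p^am$ and $[G:H]=p^e$, which forces $[G_p:G_p\cap H]=p^e$ and hence transitivity of $G_p$. The paper phrases this by choosing a Sylow $p$-subgroup $H_p\leq G_p$ of $H$ and computing $[G_p:H_p]=n$, while you bound $|G_p\cap H|$ directly via Lagrange and squeeze the orbit size between $p^e$ and $n$; these are the same computation in slightly different clothing.
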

\begin{proof}
Let $H$ be the stabilizer of one of the letters in $G$ 
and $H_p$ be a $p$-Sylow subgroup of $H$ 
with $H_p\leq G_p$. 
Because $[G:H]=n$ and $p$ does not divide 
both $[H:H_p]$ and $[G:G_p]$, 
we have $[G_p:H_p]=n=p^e$. 
Hence $H_p=G_p\cap H$ becomes the stabilizer of one of 
the letters in $G_p$ and $G_p\leq S_n$ is transitive.
\end{proof}
\begin{theorem}\label{th2}
Let $n=2^e$ be a power of $2$ and 
$G$ be a transitive subgroup of $S_n$. 
Let $G_2={\rm Syl}_2(G)$ be a $2$-Sylow subgroup of $G$. 
If $G_2\simeq C_n$, then $G\simeq C_n$. 
\end{theorem}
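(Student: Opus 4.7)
The plan is to combine Theorem~\ref{th1} with Burnside's normal $p$-complement theorem (for $p=2$, the smallest prime dividing $|G|$) and a standard block argument in permutation group theory.

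First I would observe that by Theorem~\ref{th1}, the Sylow $2$-subgroup $G_2$ is itself a transitive subgroup of $S_n$. Since $|G_2|=n$ by the hypothesis $G_2\simeq C_n$, the action of $G_2$ on the $n$ letters is regular, i.e., $G_2$ acts freely and transitively on $\{1,\ldots,n\}$. This will be used later to force the orbits of any normal subgroup to have very restricted size.

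Next, I would invoke Burnside's normal $p$-complement theorem in the following strong form: if $p$ is the smallest prime dividing $|G|$ and the Sylow $p$-subgroup of $G$ is cyclic, then $G$ has a normal $p$-complement. With $p=2$ and $G_2\simeq C_n$ cyclic, this applies and yields a decomposition $G=N\rtimes G_2$, where $N\trianglelefteq G$ has odd order.

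Finally, since $N$ is normal in $G$ and $G$ acts transitively on the $n$ letters, the orbits of $N$ form a block system for $G$, and hence all $N$-orbits have a common size $d$ that must divide both $|N|$ and $n=2^e$. Because $|N|$ is odd and $n$ is a power of $2$, we get $d=1$, so $N$ fixes every letter. As $G\leq S_n$ acts faithfully, this forces $N=1$, whence $G=G_2\simeq C_n$.

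The argument is short once one chooses the right tool; the only genuine obstacle is recognizing that cyclicity of the Sylow $2$-subgroup (combined with $2$ being the smallest prime) gives a normal complement by Burnside, at which point the transitivity/faithfulness of the permutation action of $G$ on $n=2^e$ points cleans up the odd-order part.
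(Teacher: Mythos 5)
Your proof is correct, but it follows a genuinely different route from the paper's. The paper argues by induction on $e$: it computes $N_{S_n}(G_2)=C_n\rtimes\Aut(C_n)$ to get $N_G(G_2)=G_2$, counts the elements of order $n$ in $G$ to conclude that they are exactly the odd permutations, deduces from this a parity-preservation property of the point stabilizer $H$, and then restricts to the even and odd letters to apply the induction hypothesis. You instead observe that a cyclic Sylow $2$-subgroup forces, by Burnside's normal $p$-complement theorem (for $p=2$ the hypothesis $N_G(G_2)=C_G(G_2)$ is automatic since $\Aut(C_{2^e})$ is a $2$-group while $[N_G(G_2):C_G(G_2)]$ is odd), a normal complement $N$ of odd order; the orbits of the normal subgroup $N$ form a block system, so they have a common size dividing both $|N|$ and $n=2^e$, hence size $1$, and faithfulness gives $N=1$ and $G=G_2$. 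Your argument is shorter and avoids the induction entirely, at the cost of invoking transfer theory; the paper's argument is longer but self-contained within elementary permutation-group combinatorics. One small remark: the appeal to Theorem~\ref{th1} and the regularity of $G_2$ in your first step is never actually used in your closing argument --- normality of $N$ in the transitive group $G$ already gives the block system --- so that paragraph could be deleted without loss.
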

\begin{proof}
Let $H$ be the stabilizer of one of the letters in $G$. 
We should show that $H=1$ because $[G:H]=n$. 
We will prove $H=1$ by induction in $e$. 
When $e=1$, the assertion holds. 
For $e$, we assume that $G_2=\langle\sigma\rangle\simeq C_n$ where $n=2^e$. 
Without loss of generality, we may assume that 
$\sigma=(1\cdots n)\in S_n$. 

There exist $(n-1)!$ elements of order $n$ in $S_n$ 
which are conjugate in $S_n$. 
Let $Z_{S_n}(G_2)$ be 
the centralizer of $G_2$ in $S_n$ 
and $N_{S_n}(G_2)$ be 
the normalizer of $G_2$ in $S_n$. 
Then we see that $Z_{S_n}(G_2)=G_2\simeq C_n$ and 
$N_{S_n}(G_2)=C_n\rtimes {\rm Aut}(C_n)\simeq 
\bZ/2^e\bZ\rtimes(\bZ/2^e\bZ)^\times$. 
We also have $G_2=Z_G(G_2)\leq N_G(G_2)\leq G$. 
Because $N_G(G_2)$ is also a $2$-group, 
we obtain that $Z_G(G_2)=N_G(G_2)=G_2$. 

Let $A=\{x\in G\mid {\rm ord}(x)=n\}$ 
be the set of elements of order $n$ in $G$
and 
$A_2=\{x\in G_2\mid {\rm ord}(x)=n\}=\{\sigma^i\mid i {\rm : odd}\}$ 
be the set of elements of order $n$ in $G_2$. 
If $g\in G_2$, then $gag^{-1}=a$ for any $a\in A_2$. 
If $g\in G\setminus G_2$, then $gA_2g^{-1}\cap A_2=\emptyset$ 
because $N_G(G_2)=G_2$. 
Note that $g_1A_2g_1^{-1}=g_2A_2g_2^{-1}$ if and only if 
$g_2^{-1}g_2\in G_2$. 
Hence we have $|A|=|A_2|\cdot [G:G_2]=2^{e-1}\cdot |H|=|G|/2$. 
This implies that $A=\{x\in G\mid {\rm sgn}(x)=-1\}$. 

We claim that if $h(j)=k$ $(h\in H)$, then $j\equiv k\pmod{2}$. 
Suppose not. Then there exists $\sigma^{j-k}\in A_2$ such that 
$\sigma^{j-k}h(j)=j$. But this is impossible 
because ${\rm sgn}(\sigma^{j-k}h)=-1$ and hence 
${\rm ord}(\sigma^{j-k}h)=n$. 
%for $\sigma^{j-k}\in A_2$ and $h\in H$. 
This claim implies that $\langle \sigma^2,H\rangle$ acts on $2\bZ/n\bZ=\{2,4,\ldots,n\}$. 

On the other hand, $\langle\sigma^2,H\rangle\leq G\cap A_n$ 
because ${\rm sgn}(\sigma^2)={\rm sgn}(h)=1$ $(h\in H)$. 
We also see $\langle\sigma^2,H\rangle=G\cap A_n$ 
because $[\langle\sigma^2,H\rangle:H]=n/2$. 

Remember that $|H|=[G:G_2]$ is odd. 
The restriction $G\cap A_n|_{2\bZ/n\bZ}$ of $G\cap A_n$ 
into $2\bZ/n\bZ$ seems to be a transitive subgroup of 
$S_{2\bZ/n\bZ}=S_{\{2,4,\ldots,n\}}$ 
whose $2$-Sylow subgroup is $\langle\sigma^2\rangle|_{2\bZ/n\bZ}$. 
By the assumption of induction, we have $H|_{2\bZ/n\bZ}=1$. 
Similarly, we get $H|_{1+2\bZ/n\bZ}=1$. 
Therefore, we conclude that $H=1$. 
\end{proof}

{\it Proof of Theorem \ref{thmain1}.}  
Take a transitive subgroup 
$G={\rm Gal}(L/k)\leq S_n$ $(n=2^e)$ 
and $H={\rm Gal}(L/K)$ with $[G:H]=n$. 
%the stabilizer $H={\rm Gal}(L/K)$ of one of the letters in $G$. 
By Theorem \ref{th1}, 
the $2$-Sylow subgroup $G_2={\rm Syl}_2(G)$ of $G$ 
is a transitive subgroup of $S_n$. 

$(\Rightarrow)$ 
Assume that $G\not\simeq C_n$. 
By Theorem \ref{th2}, we have $G_2\not\simeq C_n$. 
Hence $[J_{G_2/H_2}]^{fl}$ is not invertible 
by 
%Theorem \ref{th13-1} 
Endo and Miyata 
\cite[Theorem 1.5]{EM75} 
and 
%Theorem \ref{th14} 
Endo 
\cite[Theorem 2.1]{End11} 
where $H_2$ is the $2$-Sylow subgroup of $H$. 
Because $G_2$ is transitive in $S_n$, 
it follows from Lemma \ref{lemp3} (ii) that 
$[J_{G/H}]^{fl}$ is not invertible. 
Hence 
$R_{K/k}^{(1)}(\bG_m)$ is not retract $k$-rational. 

$(\Leftarrow)$ 
By 
%Theorem \ref{th13-2}, 
Endo and Miyata 
\cite[Theorem 2.3]{EM75}, 
if $G\simeq C_n$, then 
$R_{K/k}^{(1)}(\bG_m)$ is stably $k$-rational. 
\qed

\begin{example}[The case $nTm\leq S_n$ where $n=2^e$]

(1) When $n=4$, there exist $5$ transitive subgroups 
$4Tm\leq S_4$ $(1\leq m\leq 5)$: 
$4T1\simeq C_4$, 
$4T2\simeq C_2\times C_2$, 
$4T3\simeq D_4$, 
$4T4\simeq A_4$, 
$4T5\simeq S_4$. 

(2) When $n=8$, there exist $50$ transitive subgroups of 
$8Tm\leq S_8$ $(1\leq m\leq 50)$. 
There exist $5$ groups $G=8Tm$ $(1\leq m\leq 5)$ 
with $|G|=8$ (see Butler and McKay \cite{BM83}, \cite{GAP}): 
$8T1\simeq C_8$, 
$8T2\simeq C_4\times C_2$, 
$8T3\simeq (C_2)^3$, 
$8T4\simeq D_4$, 
$8T5\simeq Q_8$.

(3) When $n=16$, there exist $1954$ transitive subgroups of 
$16Tm\leq S_{16}$ $(1\leq m\leq 1954)$. 
There exist $14$ groups $G=16Tm$ $(1\leq m\leq 14)$ 
with $|G|=16$ (see Example \ref{ex16}): 
$16T1 \simeq C_{16}$, 
$16T2 \simeq C_4\times (C_2)^2$, 
$16T3 \simeq (C_2)^4$, 
$16T5 \simeq C_4\times C_4$, 
$16T5 \simeq C_8\times C_2$, 
$16T6 \simeq M_{16}$, 
$16T7 \simeq Q_8\times C_2$, 
$16T8 \simeq C_4\rtimes C_4$, 
$16T9 \simeq D_4\times C_2$, 
$16T10 \simeq (C_4 \times C_2) \rtimes C_2$, 
$16T11 \simeq (C_4 \times C_2) \rtimes C_2$, 
$16T12 \simeq QD_8$, 
$16T13 \simeq D_8$, 
$16T14 \simeq Q_{16}$.

(4) When $n=32$, there exist $2801324$ transitive subgroups of 
$32Tm\leq S_{32}$ $(1\leq m\leq 2801324)$ (see Cannon and Holt \cite{CH08}). 
\end{example}

%%%%%%%%%
%%%%%%%%%%Computation16Tm
\begin{example}[Computations for $16Tm\leq S_{16}$]\label{ex16}
For $G=16Tm\leq S_{16}$, Theorem \ref{th1} and Theorem \ref{th2}  
can be checked by GAP as follows: 
\begin{verbatim}
gap> NrTransitiveGroups(16); # the number of transitive subgroups G=16Tm <= S16
1954
gap> Sy162:=List([1..1954],x->SylowSubgroup(TransitiveGroup(16,x),2));;
gap> Filtered([1..1954],x->IsTransitive(Sy162[x])=false); 
# all 2-Syllow subgroups of 16Tm are transitive
[  ]
gap> Filtered([1..1954],x->IsCyclic(Sy162[x])=true); 
# all 2-Syllow subgroups of 16Tm are cyclic except for m=1
[ 1 ]
gap> Filtered([1..1954],x->Size(TransitiveGroup(16,x))=16); # 16Tm with |16Tm|=16
[ 1, 2, 3, 4, 5, 6, 7, 8, 9, 10, 11, 12, 13, 14 ]
gap> List([1..14],x->StructureDescription(TransitiveGroup(16,x)));
[ "C16", "C4 x C2 x C2", "C2 x C2 x C2 x C2", "C4 x C4", "C8 x C2", "C8 : C2", 
"C2 x Q8", "C4 : C4", "C2 x D8", "(C4 x C2) : C2", "(C4 x C2) : C2", "QD16", 
 "D16", "Q16" ]
\end{verbatim}
\end{example}

%%%%%%%%%%%%%%%%%%%%%%%%%%%%%%%%%%%%%%%%%%%%%%%%%%%%%%%%%%%%%%%%%
\section{Proof of Theorem {\ref{thmain2}}}\label{seProof2}

%%%%%%%%%%%%%%%%%%%%%%%%%%%%%%%%%%%%%%%%%%%%%%%%%%%%%%%
%{\it Proof of Theorem \ref{thmain2}.}

Let $K/k$ be a separable field extension of degree $n$ 
and $L/k$ be the Galois closure of $K/k$. 
Let $G={\rm Gal}(L/k)$ be a transitive subgroup of $S_n$ 
%which acts on $L(x_1,x_2,x_3,x_4)$ via $(\ref{acts})$, 
and $H={\rm Gal}(L/K)$ with $[G:H]=n$. 
%%%%%%
We may assume that 
$H$ is the stabilizer of one of the letters in $G$, 
i.e. $L=k(\theta_1,\ldots,\theta_n)$ and $K=L^H=k(\theta_i)$ 
where $1\leq i\leq n$. 

Let $nTm$ be the $m$-th transitive subgroup of $S_n$ 
%There exist $2$ (resp. $5$, $5$, $16$, $7$, $50$, $34$, $45$, $8$) 
%transitive subgroups of $S_3$ (resp. $S_4$, $S_5$, $S_6$, $S_7$, 
%$S_8$, $S_9$, $S_{10}$, $S_{11}$) 
(see Butler and McKay \cite{BM83} for $n\leq 11$, 
Royle \cite{Roy87} for $n=12$, 
Butler \cite{But93} for $n=14,15$ 
and \cite{GAP}).

We provide the following GAPalgorithm to certify 
whether $F=[J_{G/H}]^{fl}$ is invertible (resp. zero) 
(see also Hoshi and Yamasaki \cite[Chapter 5]{HY17}). 
Some related programs are available from\\ 
{\tt https://www.math.kyoto-u.ac.jp/\~{}yamasaki/Algorithm/RatProbNorm1Tori/}.\\

\begin{algorithm}[{see Hoshi and Yamasaki \cite[Chapter 5 and Chapter 8]{HY17}}]\label{AL}
~\\
\hspace*{3mm}
{\rm (0)} Construction of the Chevalley module $J_{G/H}$ (see \cite[Chapter 8]{HY17}): 

{\tt Norm1TorusJ(}$n,m${\tt )} returns 
$J_{G/H}$ for $G=nTm\leq S_n$ and $H$ is the stabilizer of one of the 
letters in $G$.

{\rm (1)} Whether $F=[J_{G/H}]^{fl}$ is invertible: 

{\tt IsInvertibleF(Norm1TorusJ(}$n,m${\tt ))} returns 
true (resp. false) if $[J_{G/H}]^{fl}$ is invertible 
(resp. not invertible) for $G=nTm\leq S_n$ and $H$ is the stabilizer of one of 
the letters in $G$ (see \cite[Section 5.2]{HY17}). 

%However, if the rank of $F$ is not small, 
%this function does not work well (in a suitable time). 
%In this paper, we provide and use the following new function 
%which works well for more higher rank cases and 
%is based on \cite[Section 5.7]{HY17}: 
%
%{\tt IsInvertibleFFromBase(Norm1TorusJ(}$n,m${\tt ),}$m_i${\tt )} 
%returns the same as {\tt IsInvertibleF(Norm1TorusJ(}$n,m${\tt ))} 
%but with respect to a given suitable $m_i=P^\circ$ 
%with the coflabby resolution 
%$0\rightarrow{\rm Ker}f\rightarrow P^\circ\xrightarrow{f}M^\circ\rightarrow 0$ %
%of $M^\circ$ 
%which may be obtained by the function {\tt SearchCoflabbyResolutionBase} 
%(see \cite[Section 5.7]{HY17} and also Example \ref{ex15} and Example \ref{ex22}).

{\rm (2)} Possibility for $F=0$ where $F=[J_{G/H}]^{fl}$: 

{\tt PossibilityOfStablyPermutationF(Norm1TorusJ(}$m,n${\tt ))} 
returns a basis $\mathcal{L}=\{l_1,\ldots,l_s\}$ of possible solutions 
space $\{(a_1,\ldots,a_r,b_1)\}$ $(a_i,b_1\in\bZ)$ (see also \cite[Section 5.4]{HY17}) to 
\begin{align*}
\bigoplus_{i=1}^r \bZ[G/H_i]^{\oplus a_i}\ \simeq\ F^{\oplus(-b_1)}
\end{align*}
for $G=mTn\leq S_n$, $H$ is the stabilizer of one of 
the letters in $G$ and $F=[J_{G/H}]^{fl}$. 
In particular, if all the $b_1$'s are even, then 
we can conclude that $F=[J_{G/H}]^{fl}\neq 0$. 

{\rm (3)} Verification of $F=0$ where $F=[J_{G/H}]^{fl}$:

{\tt FlabbyResolutionLowRankFromGroup((Norm1TorusJ(}$n,m${\tt ),TransitiveGroup(}$n,m${\tt)).actionF} 
returns a suitable flabby class $F=[J_{G/H}]^{fl}$ of $J_{G/H}$ 
with low rank for $G=nTm\leq S_n$ and $H$ is the stabilizer of one of 
the letters in $G$ 
by using the backtracking techniques. 
Repeating the algorithm, 
by defining $[J_{G/H}]^{fl^n}:=[[J_{G/H}]^{fl^{n-1}}]^{fl}$ inductively, 
$[J_{G/H}]^{fl}=0$ is provided if we may find some $n$ with 
$[J_{G/H}]^{fl^n}=0$ (this method is slightly improved to 
the {\tt flfl} algorithm, see \cite[Section 5.3]{HY17}). 
\end{algorithm}

{\it Proof of Theorem \ref{thmain2}.} 
We may assume that 
$H$ is the stabilizer of one of the letters in $G$ 
(see the first paragraph of Section \ref{seProof2}).\\

%%%%%%%%%%The case 10Tm
(1) The case $10Tm$ $(1 \leq m\leq 45)$. 

By \cite[Theorem 1.11]{HY}, 
we should show that $T$ is stably $k$-rational 
for $10T11\simeq A_5\times C_2$. 
For $10T11$, by Algorithm \ref{AL} (3), 
we may take $F=[J_{G/H}]^{fl}$ with ${\rm rank}_\bZ(F)=31$, 
$F^\prime=[F]^{fl}$ with ${\rm rank}_\bZ(F^\prime)=13$ 
and $F^{\prime\prime}=[F^\prime]^{fl}$ with $F^{\prime\prime}=[\bZ]=0$. 
This implies that $F=0$ 
and hence $T$ is stably $k$-rational (see Example \ref{ex10}).\\

%%%%%%%%%%The case 12Tm
(2) The case $12Tm$ $(1\leq m \leq 301)$. 

(2-1) The case where $K/k$ is Galois: $1 \leq m \leq 5$. 
For $12T1 \simeq C_{15}$, $12T2 \simeq C_6 \times C_2$, $12T3 \simeq D_6$, $12T4 \simeq A_4$, $12T5 \simeq C_3 \rtimes C_4$, $K/k$ is a Galois extension. 
By 
%Theorem \ref{th13-2}, 
Endo and Miyata 
\cite[Theorem 2.3]{EM75}, 
$T$ is stably $k$-rational for $12T1$, $12T5$. 
By 
%Theorem \ref{th13-1}, 
Endo and Miyata 
\cite[Theorem 1.5]{EM75},  
$T$ is not retract $k$-rational for $12T2$, $12T3$, $12T4$. 

(2-2) The case where $K/k$ is not Galois: $6 \leq m \leq 301$. 

Case 1: $m=11$. For $12T11 \simeq C_4\times S_3$, 
%For $G=12T11$, 
by Algorithm \ref{AL} (3), 
we may take $F=[J_{G/H}]^{fl}$ with ${\rm rank}_\bZ(F)=17$, 
$F^\prime=[F]^{fl}$ with ${\rm rank}_\bZ(F^\prime)=4$ 
and $F^\prime$ is permutation. 
This implies that $F=0$ and 
hence $T$ is stably $k$-rational (see Example \ref{ex12}). 
(We note that $12T1\leq 12T5\leq 12T11$.)

Case 2: $m\neq 11$. 
By using the command\\ 
{\tt List([1..301],x->Filtered([1..x],y->IsSubgroup(TransitiveGroup(12,x),}\\
{\tt TransitiveGroup(12,y))))}\\ 
in GAP \cite{GAP} (see also Example \ref{ex14} for the case where $n=14$), 
we obtain the inclusions $12Tm\leq 12Tm^\prime$ 
among the groups $G=12Tm$ with minimal groups $12Tm$ 
where $m\in I_{12}:=\{2,3,4,7,8,9,12,15,16,17,19,29,30,31,32,\\
33,34,36,40,41,46,47,57,58,59,60,61,63,64,65,66,68,69,70,73,74,75,76,89,91,93,96,99,100,102,105,107,\\
160,162,166,171,172,173,179,181,182,183,207,212,216,246,254,272,278,295\}$.

By using the command\\ 
{\tt Filtered(List(ConjugacyClassesSubgroups(TransitiveGroup(12,}$m${\tt )),Representative),\\
x->Length(Orbits(x,[1..12]))=1)},\\ 
we also see the following inclusions for 
$12Tm$ with $m\in I_0:=\{207, 212, 216,254,272,278,295\}$ 
(see Example \ref{ex12}, 
we may reduce these cases which take more computational time and resources): \\ 
$12T166\leq  12T207, 12T254$,\\ 
$12T46\leq 12T212, 12T216, 12T272$,\\
$12T17\leq 12T278$,\\
$12T2\leq 12T295$.

By the inclusion of $G=12Tm$ above 
and Lemma \ref{lemp3} (ii), 
it is enough to check that $[J_{G/H}]^{fl}$ is not invertible 
for $I_{12}\setminus I_0$. 
By Algorithm \ref{AL} (1), we obtain that $[J_{G/H}]^{fl}$ is not invertible 
and hence, by Theorem \ref{thEM73} (iii), 
$T$ is not retract $k$-rational 
for $m\in I_{12}\setminus I_0$ (see Example \ref{ex12}).\\

%%%%%%%%%%The case 14Tm
(3) The case $14Tm$ $(1\leq m\leq 63)$.

(3-1) The case where $K/k$ is Galois: $m=1,2$. 
For $14T1 \simeq C_{14}$ and $14T2 \simeq D_{7}$, $K/k$ is a Galois extension. 
By 
%Theorem \ref{th13-2}, 
Endo and Miyata 
\cite[Theorem 2.3]{EM75}, 
$T$ is stably $k$-rational for $14T1$ and $14T2$. 

(3-2) The case where $K/k$ is not Galois: $3 \leq m \leq 63$.

Case 1: $m = 3$. For $14T3 \simeq D_{14}$, 
by Algorithm \ref{AL} (1), we obtain that 
$[J_{G/H}]^{fl}$ is invertible and hence $T$ is retract $k$-rational 
by Theorem \ref{thEM73} (iii). 
By Algorithm \ref{AL} (3), 
we may take $F=[J_{G/H}]^{fl}$ with ${\rm rank}_\bZ(F)=17$ 
and $F^\prime=[F]^{fl}=\bZ^2$ which is permutation. 
This implies that $F=0$ and 
hence $T$ is stably $k$-rational by Theorem \ref{thEM73} (i) 
(see Example \ref{ex14}).

Case 2: $m = 4,5,7,16,19,46,47,49$. 
By Algorithm \ref{AL} (1), 
we see that $[J_{G/H}]^{fl}$ is invertible and hence 
$T$ is retract $k$-rational by Theorem \ref{thEM73} (iii) 
for $m = 4,5,7,16,19,46,47,49$. 
For $m=4,5,16$, 
by Algorithm \ref{AL} (2), 
we see that $[J_{G/H}]^{fl}\neq 0$ 
and hence $T$ is not stably $k$-rational (see Example \ref{ex14}). 
By Lemma \ref{lemp3} (i) and 
the inclusions $14T4 \leq 14T7, 14T46$ and 
$14T5 \leq 14T19 \leq 14T47 \leq 14T49$, 
we have 
$[J_{G/H}]^{fl}\neq 0$ 
and hence $T$ is also not stably $k$-rational for $m=7,19,46,47,49$.

Case 3: $6 \leq m \leq 63$ and $m \not = 7,16,19,46,47,49$. 

By using the command\\ 
{\tt List([1..63],x->Filtered([1..x],y->IsSubgroup(TransitiveGroup(14,x),}\\
{\tt TransitiveGroup(14,y))))}\\ 
in GAP \cite{GAP} (see Example \ref{ex14}), 
we get the inclusions $14Tm\leq 14Tm^\prime$ 
among the groups $G=14Tm$ with minimal groups $14Tm$ 
where $m\in I_{14}:=\{6,8,10,12,26,30\}$.

By the inclusion of $G=14Tm$ above and Lemma \ref{lemp3} (ii), 
it is enough to show that $[J_{G/H}]^{fl}$ is not invertible 
for $m\in I_{14}$. 
By Algorithm \ref{AL} (1), we see that 
$[J_{G/H}]^{fl}$ is not invertible 
and hence $T$ is not retract $k$-rational 
for $m\in I_{14}$ (see Example \ref{ex14}).\\

%%%%%%%%%%The case 15Tm
(4) The case $15Tm$ $(1\leq m \leq 104)$.

(4-1) The case where $K/k$ is Galois: $m=1$. 
For $15T1 \simeq C_{15}$, $K/k$ is a Galois extension. 
It follows from 
%Theorem \ref{th13-2} 
Endo and Miyata 
\cite[Theorem 2.3]{EM75} 
that $T$ is stably $k$-rational for $15T1$. 

(4-2) The case where $K/k$ is not Galois: $2 \leq m \leq 104$. 

Case 1: $m=2,3,4$. 
For $15T2 \simeq D_{15}, 
15T3 \simeq D_5 \times C_3, 15T4 \simeq S_3 \times C_5$, 
it follows from 
%Theorem \ref{th15} 
Endo 
\cite[Theorem 3.1]{End11} 
that 
$T$ is stably $k$-rational for $15T2, 15T3, 15T4$.

Case 2: $m=5,7,10,16,23$.
By Algorithm \ref{AL} (1), 
we see that $[J_{G/H}]^{fl}$ is invertible and hence 
$T$ is retract $k$-rational for $m =5, 7, 16, 23$.

For $15T5\simeq A_5$, by Algorithm \ref{AL} (3), 
we get $F=[J_{G/H}]^{fl}$ with ${\rm rank}_\bZ(F)=21$ 
and 
$F^\prime=[F]^{fl}=\bZ$. 
This implies that $F=0$ and 
hence $T$ is stably $k$-rational (see Example \ref{ex15}). 

For $15T7 \simeq D_5 \times S_3, 
15T16 \simeq A_5 \times C_3, 
15T23 \simeq A_5 \times S_3$, 
it is enough to prove that $[J_{G/H}]^{fl}=0$ 
for $G=15T23$ because $15T7\leq 15T23$, $15T16\leq 15T23$ 
and Lemma \ref{lemp3} (i). 
By Algorithm \ref{AL} (3), 
we obtain that $F=[J_{G/H}]^{fl}$ with ${\rm rank}_\bZ(F)=27$, 
$F^\prime=[F]^{fl}$ with ${\rm rank}_\bZ(F^\prime)=8$ 
and $F^{\prime\prime}=[F^\prime]^{fl}$ with $F^{\prime\prime}=\bZ$. 
This implies that $F=0$ and 
hence $T$ is stably $k$-rational (see Example \ref{ex15}). 

For $15T10\simeq S_5$, 
by Algorithm \ref{AL} (2), 
we obtain that $[J_{G/H}]^{fl}\neq 0$ and hence 
$T$ is not stably $k$-rational (see Example \ref{ex15}). 

Case 3: $m=6,8,11,22,24,29$. 
For 
$15T6\simeq C_{15}\rtimes C_4$, 
$15T8 \simeq F_{20} \times C_3$, 
it follows from 
%Theorem \ref{th15} 
Endo 
\cite[Theorem 3.1]{End11} 
that $[J_{G/H}]^{fl}$ is invertible and $[J_{G/H}]^{fl}\neq 0$. 
Hence $T$ is not stably but retract $k$-rational.

For $m=11,22,24,29$,  by Algorithm \ref{AL} (1), 
we see that $[J_{G/H}]^{fl}$ is invertible and hence 
$T$ is retract $k$-rational. 
By Lemma \ref{lemp3} (i) and the inclusions 
$15T6 \leq 15T11,15T22,15T29$ and 
$15T8 \leq 15T24$, 
we obtain that $[J_{G/H}]^{fl}\neq 0$ and hence 
$T$ is not stably $k$-rational for $m=11,22,24,29$. 

Case 4: $9\leq m\leq 104$ and $m\neq 10,11,16,22,23,24,29$. 

By using the command\\ 
{\tt List([1..104],x->Filtered([1..x],y->IsSubgroup(TransitiveGroup(15,x),}\\
{\tt TransitiveGroup(15,y))))}\\ 
in GAP \cite{GAP} (see also Example \ref{ex14} for the case where $n=14$), 
we obtain the inclusions $15Tm\leq 15Tm^\prime$ 
among the groups $G=15Tm$ with minimal groups $15Tm$ 
where $m\in I_{15}:=\{9,15,20,26\}$.

By the inclusions of groups $G=15Tm$ above 
and Lemma \ref{lemp3} (ii), 
it is enough to show that $[J_{G/H}]^{fl}$ is not invertible 
%$T$ is not retract $k$-rational 
for $m\in I_{15}$. 
By Algorithm \ref{AL} (1), we obtain that 
$[J_{G/H}]^{fl}$ is not invertible and hence 
$T$ is not retract $k$-rational for $m\in I_{15}$ 
(see Example \ref{ex15}).\qed\\

We give GAP \cite{GAP} computations in the proof of Theorem \ref{thmain2} 
for $n=10, 12,14,15$ 
in Example \ref{ex10} to Example \ref{ex15} 
(see \cite[Chapter 5]{HY17} for the explanation of the functions). 
Some related programs are available from 
{\tt https://www.math.kyoto-u.ac.jp/\~{}yamasaki/Algorithm/RatProbNorm1Tori/}.\\

\begin{example}[Computations for $10T11\leq S_{10}$]\label{ex10}
~\\
\begin{verbatim}
gap> Read("FlabbyResolutionFromBase.gap");

gap> J:=Norm1TorusJ(10,11);
<matrix group with 3 generators>
gap> StructureDescription(J);
"C2 x A5"
gap> IsInvertibleF(J); # 10T11 is retract k-rational
true
gap> T:=TransitiveGroup(10,11);
A(5)[x]2
gap> F:=FlabbyResolutionLowRankFromGroup(J,T).actionF;
<matrix group with 3 generators>
gap> Rank(F.1); # F is of rank 31
31
gap> F2:=FlabbyResolutionLowRankFromGroup(F,T).actionF;
<matrix group with 3 generators>
gap> Rank(F2.1); # [F]^fl is of rank 13
13
gap> F3:=FlabbyResolutionLowRankFromGroup(F2,T).actionF; 
# 10T11 is stably k-rational because [F]^fl=0 
Group([ [ [ 1 ] ], [ [ 1 ] ], [ [ 1 ] ] ])
\end{verbatim}
~\\
\end{example}
 
\begin{example}[Computations for $12Tm\leq S_{12}$]\label{ex12}
~\\
\begin{verbatim}
gap> Read("FlabbyResolutionFromBase.gap");

gap> J:=Norm1TorusJ(12,11);
<matrix group with 3 generators>
gap> StructureDescription(J);
"C4 x S3"
gap> IsInvertibleF(J); # 12T11 is retract k-rational
true
gap> T:=TransitiveGroup(12,11);
S(3)[x]C(4)
gap> F:=FlabbyResolutionLowRankFromGroup(J,T).actionF;
<matrix group with 3 generators>
gap> Rank(F.1); # F is of rank 17
17
gap> F2:=FlabbyResolutionLowRankFromGroup(F,T).actionF;
<matrix group with 3 generators>
gap> Rank(F2.1); # [F]^fl is of rank 4
4
gap> GeneratorsOfGroup(F2);
[ [ [ 1, 0, 0, 0 ], [ 0, 1, 0, 0 ], [ 0, 0, 1, 0 ], [ 0, 0, 0, 1 ] ], 
  [ [ 1, 0, 0, 0 ], [ 0, 1, 0, 0 ], [ 0, 0, 1, 0 ], [ 0, 0, 0, 1 ] ], 
  [ [ 0, 1, 1, 2 ], [ 0, 1, 0, 0 ], [ 3, -3, -2, -6 ], [ -1, 1, 1, 3 ] ] ]
gap> F3:=FlabbyResolutionLowRankFromGroup(F2,T).actionF; 
# 12T11 is stably k-rational because [F]^fl is permutation
[  ]

gap> IsInvertibleF(Norm1TorusJ(12,2)); # 12T2 is not retract k-rational
false
gap> IsInvertibleF(Norm1TorusJ(12,3)); # 12T3 is not retract k-rational
false
gap> IsInvertibleF(Norm1TorusJ(12,4)); # 12T4 is not retract k-rational
false
gap> IsInvertibleF(Norm1TorusJ(12,7)); # 12T7 is not retract k-rational
false
gap> IsInvertibleF(Norm1TorusJ(12,8)); # 12T8 is not retract k-rational
false
gap> IsInvertibleF(Norm1TorusJ(12,9)); # 12T9 is not retract k-rational
false
gap> IsInvertibleF(Norm1TorusJ(12,12)); # 12T12 is not retract k-rational
false
gap> IsInvertibleF(Norm1TorusJ(12,15)); # 12T15 is not retract k-rational
false
gap> IsInvertibleF(Norm1TorusJ(12,16)); # 12T16 is not retract k-rational
false
gap> IsInvertibleF(Norm1TorusJ(12,17)); # 12T17 is not retract k-rational
false
gap> IsInvertibleF(Norm1TorusJ(12,19)); # 12T19 is not retract k-rational
false
gap> IsInvertibleF(Norm1TorusJ(12,29)); # 12T29 is not retract k-rational
false
gap> IsInvertibleF(Norm1TorusJ(12,30)); # 12T30 is not retract k-rational
false
gap> IsInvertibleF(Norm1TorusJ(12,31)); # 12T31 is not retract k-rational
false
gap> IsInvertibleF(Norm1TorusJ(12,32)); # 12T32 is not retract k-rational
false
gap> IsInvertibleF(Norm1TorusJ(12,33)); # 12T33 is not retract k-rational
false
gap> IsInvertibleF(Norm1TorusJ(12,34)); # 12T34 is not retract k-rational
false
gap> IsInvertibleF(Norm1TorusJ(12,36)); # 12T36 is not retract k-rational
false
gap> IsInvertibleF(Norm1TorusJ(12,40)); # 12T40 is not retract k-rational
false
gap> IsInvertibleF(Norm1TorusJ(12,41)); # 12T41 is not retract k-rational
false
gap> IsInvertibleF(Norm1TorusJ(12,46)); # 12T46 is not retract k-rational
false
gap> IsInvertibleF(Norm1TorusJ(12,47)); # 12T47 is not retract k-rational
false
gap> IsInvertibleF(Norm1TorusJ(12,57)); # 12T57 is not retract k-rational
false
gap> IsInvertibleF(Norm1TorusJ(12,58)); # 12T58 is not retract k-rational
false
gap> IsInvertibleF(Norm1TorusJ(12,59)); # 12T59 is not retract k-rational
false
gap> IsInvertibleF(Norm1TorusJ(12,60)); # 12T60 is not retract k-rational
false
gap> IsInvertibleF(Norm1TorusJ(12,61)); # 12T61 is not retract k-rational
false
gap> IsInvertibleF(Norm1TorusJ(12,63)); # 12T63 is not retract k-rational
false
gap> IsInvertibleF(Norm1TorusJ(12,64)); # 12T64 is not retract k-rational
false
gap> IsInvertibleF(Norm1TorusJ(12,65)); # 12T65 is not retract k-rational
false
gap> IsInvertibleF(Norm1TorusJ(12,66)); # 12T66 is not retract k-rational
false
gap> IsInvertibleF(Norm1TorusJ(12,68)); # 12T68 is not retract k-rational
false
gap> IsInvertibleF(Norm1TorusJ(12,69)); # 12T69 is not retract k-rational
false
gap> IsInvertibleF(Norm1TorusJ(12,70)); # 12T70 is not retract k-rational
false
gap> IsInvertibleF(Norm1TorusJ(12,73)); # 12T73 is not retract k-rational
false
gap> IsInvertibleF(Norm1TorusJ(12,74)); # 12T74 is not retract k-rational
false
gap> IsInvertibleF(Norm1TorusJ(12,75)); # 12T75 is not retract k-rational
false
gap> IsInvertibleF(Norm1TorusJ(12,76)); # 12T76 is not retract k-rational
false
gap> IsInvertibleF(Norm1TorusJ(12,89)); # 12T89 is not retract k-rational
false
gap> IsInvertibleF(Norm1TorusJ(12,91)); # 12T91 is not retract k-rational
false
gap> IsInvertibleF(Norm1TorusJ(12,93)); # 12T93 is not retract k-rational
false
gap> IsInvertibleF(Norm1TorusJ(12,96)); # 12T96 is not retract k-rational
false
gap> IsInvertibleF(Norm1TorusJ(12,99)); # 12T99 is not retract k-rational
false
gap> IsInvertibleF(Norm1TorusJ(12,100)); # 12T100 is not retract k-rational
false
gap> IsInvertibleF(Norm1TorusJ(12,102)); # 12T102 is not retract k-rational
false
gap> IsInvertibleF(Norm1TorusJ(12,105)); # 12T105 is not retract k-rational
false
gap> IsInvertibleF(Norm1TorusJ(12,107)); # 12T107 is not retract k-rational
false
gap> IsInvertibleF(Norm1TorusJ(12,160)); # 12T160 is not retract k-rational
false
gap> IsInvertibleF(Norm1TorusJ(12,162)); # 12T162 is not retract k-rational
false
gap> IsInvertibleF(Norm1TorusJ(12,166)); # 12T166 is not retract k-rational
false
gap> IsInvertibleF(Norm1TorusJ(12,171)); # 12T171 is not retract k-rational
false
gap> IsInvertibleF(Norm1TorusJ(12,172)); # 12T172 is not retract k-rational
false
gap> IsInvertibleF(Norm1TorusJ(12,173)); # 12T173 is not retract k-rational
false
gap> IsInvertibleF(Norm1TorusJ(12,179)); # 12T179 is not retract k-rational
false
gap> IsInvertibleF(Norm1TorusJ(12,181)); # 12T181 is not retract k-rational
false
gap> IsInvertibleF(Norm1TorusJ(12,182)); # 12T182 is not retract k-rational
false
gap> IsInvertibleF(Norm1TorusJ(12,183)); # 12T183 is not retract k-rational
false
gap> IsInvertibleF(Norm1TorusJ(12,246)); # 12T246 is not retract k-rational
false

gap> List(Filtered(List(ConjugacyClassesSubgroups(TransitiveGroup(12,207)),
> Representative),x->Length(Orbits(x,[1..12]))=1),Size);
[ 576, 1152 ]
gap> List(Filtered(List(ConjugacyClassesSubgroups(TransitiveGroup(12,212)),
> Representative),x->Length(Orbits(x,[1..12]))=1),Size);
[ 72, 72, 72, 72, 72, 144, 144, 648, 648, 1296 ]
gap> List(Filtered(List(ConjugacyClassesSubgroups(TransitiveGroup(12,216)),
> Representative),x->Length(Orbits(x,[1..12]))=1),Size);
[ 72, 72, 72, 648, 648, 1296 ]
gap> List(Filtered(List(ConjugacyClassesSubgroups(TransitiveGroup(12,254)),
> Representative),x->Length(Orbits(x,[1..12]))=1),Size);
[ 576, 576, 1152, 1728, 3456 ]
gap> List(Filtered(List(ConjugacyClassesSubgroups(TransitiveGroup(12,272)),
> Representative),x->Length(Orbits(x,[1..12]))=1),Size);
[ 72, 72, 144, 720, 7920 ]
gap> List(Filtered(List(ConjugacyClassesSubgroups(TransitiveGroup(12,278)),
> Representative),x->Length(Orbits(x,[1..12]))=1),Size);
[ 12, 12, 24, 36, 36, 72, 72, 144, 576, 14400 ]
gap> List(Filtered(List(ConjugacyClassesSubgroups(TransitiveGroup(12,295)),
> Representative),x->Length(Orbits(x,[1..12]))=1),Size);
[ 12, 12, 12, 24, 24, 24, 36, 48, 48, 60, 72, 72, 72, 96, 96, 96, 120, 120, 
  144, 192, 216, 240, 432, 660, 720, 720, 1440, 7920, 95040 ]
\end{verbatim}
~\\
\end{example}
 
\begin{example}[Computations for $14Tm\leq S_{14}$]\label{ex14}
~\\
\begin{verbatim}
gap> Read("FlabbyResolutionFromBase.gap");

gap> List([1..63],x->Filtered([1..x],y->IsSubgroup(TransitiveGroup(14,x),
> TransitiveGroup(14,y))));
[ [ 1 ], [ 2 ], [ 1, 2, 3 ], [ 2, 4 ], [ 1, 5 ], [ 6 ], [ 1, 2, 3, 4, 5, 7 ],
  [ 1, 8 ], [ 1, 6, 9 ], [ 10 ], [ 6, 11 ], [ 12 ], [ 1, 2, 3, 8, 13 ],
  [ 1, 5, 8, 14 ], [ 1, 8, 15 ], [ 16 ], [ 1, 5, 10, 17 ],
  [ 1, 5, 6, 9, 11, 18 ], [ 1, 5, 19 ], [ 1, 2, 3, 8, 12, 13, 20 ], [ 6, 21 ],
  [ 12, 22 ], [ 12, 23 ], [ 1, 2, 3, 4, 5, 7, 8, 13, 14, 24 ],
  [ 1, 2, 3, 8, 13, 15, 25 ], [ 26 ], [ 2, 6, 21, 27 ], [ 6, 21, 28 ],
  [ 1, 6, 9, 21, 29 ], [ 30 ], [ 1, 2, 3, 8, 12, 13, 15, 20, 22, 25, 31 ],
  [ 1, 2, 3, 4, 5, 7, 8, 12, 13, 14, 20, 23, 24, 32 ], [ 6, 11, 33 ],
  [ 6, 10, 11, 34 ], [ 6, 11, 21, 35 ], [ 12, 22, 23, 36 ],
  [ 1, 2, 3, 4, 5, 7, 8, 13, 14, 15, 24, 25, 37 ],
  [ 1, 2, 3, 6, 9, 21, 27, 28, 29, 38 ], [ 30, 39 ],
  [ 2, 4, 6, 11, 21, 27, 35, 40 ], [ 6, 11, 21, 28, 35, 41 ],
  [ 1, 5, 6, 9, 11, 18, 33, 42 ], [ 1, 5, 6, 9, 10, 11, 17, 18, 19, 34, 43 ],
  [ 1, 5, 6, 9, 11, 18, 21, 29, 35, 44 ],
  [ 1, 2, 3, 4, 5, 7, 8, 12, 13, 14, 15, 20, 22, 23, 24, 25, 31, 32, 36, 37, 45
     ], [ 2, 4, 46 ], [ 1, 5, 19, 47 ],
  [ 1, 2, 3, 4, 5, 6, 7, 9, 11, 18, 21, 27, 28, 29, 35, 38, 40, 41, 44, 48 ],
  [ 1, 2, 3, 4, 5, 7, 19, 46, 47, 49 ], [ 6, 10, 11, 21, 33, 34, 35, 50 ],
  [ 1, 5, 6, 9, 10, 11, 17, 18, 19, 21, 29, 33, 34, 35, 42, 43, 44, 50, 51 ],
  [ 1, 5, 8, 14, 15, 16, 19, 52 ], [ 6, 10, 11, 21, 33, 34, 35, 50, 53 ],
  [ 2, 4, 6, 10, 11, 21, 27, 33, 34, 35, 40, 46, 50, 53, 54 ],
  [ 6, 10, 11, 21, 28, 33, 34, 35, 41, 50, 53, 55 ],
  [ 1, 5, 6, 9, 10, 11, 17, 18, 19, 21, 29, 33, 34, 35, 42, 43, 44, 47, 50, 51,
      53, 56 ],
  [ 1, 2, 3, 4, 5, 6, 7, 9, 10, 11, 17, 18, 19, 21, 27, 28, 29, 33, 34, 35, 38,
      40, 41, 42, 43, 44, 46, 47, 48, 49, 50, 51, 53, 54, 55, 56, 57 ],
  [ 1, 5, 8, 14, 15, 16, 19, 26, 47, 52, 58 ], [ 12, 22, 23, 36, 59 ],
  [ 1, 2, 3, 4, 5, 7, 8, 13, 14, 15, 16, 19, 24, 25, 26, 37, 46, 47, 49, 52,
      58, 60 ],
  [ 1, 2, 3, 4, 5, 7, 8, 12, 13, 14, 15, 16, 19, 20, 22, 23, 24, 25, 26, 31,
      32, 36, 37, 45, 46, 47, 49, 52, 58, 59, 60, 61 ],
  [ 6, 10, 11, 12, 21, 22, 23, 28, 30, 33, 34, 35, 36, 41, 50, 53, 55, 59, 62 ],
  [ 1, 2, 3, 4, 5, 6, 7, 8, 9, 10, 11, 12, 13, 14, 15, 16, 17, 18, 19, 20, 21,
      22, 23, 24, 25, 26, 27, 28, 29, 30, 31, 32, 33, 34, 35, 36, 37, 38, 39,
      40, 41, 42, 43, 44, 45, 46, 47, 48, 49, 50, 51, 52, 53, 54, 55, 56, 57,
      58, 59, 60, 61, 62, 63 ] ]

gap> IsInvertibleF(Norm1TorusJ(14,4)); # 14T4 is retract k-rational
true
gap> IsInvertibleF(Norm1TorusJ(14,5)); # 14T5 is retract k-rational
true
gap> IsInvertibleF(Norm1TorusJ(14,7)); # 14T7 is retract k-rational
true
gap> IsInvertibleF(Norm1TorusJ(14,16)); # 14T16 is retract k-rational
true
gap> IsInvertibleF(Norm1TorusJ(14,19)); # 14T19 is retract k-rational
true
gap> IsInvertibleF(Norm1TorusJ(14,46)); # 14T46 is retract k-rational
true
gap> IsInvertibleF(Norm1TorusJ(14,47)); # 14T47 is retract k-rational
true
gap> IsInvertibleF(Norm1TorusJ(14,49)); # 14T49 is retract k-rational
true

gap> PossibilityOfStablyPermutationF(Norm1TorusJ(14,4)); 
# 14T4 is not stably k-rational by Algorithm 4.1 (2)
[ [ 1, 1, 1, 1, 1, -1, -1, 1, -2 ] ]
gap> PossibilityOfStablyPermutationF(Norm1TorusJ(14,5)); 
# 14T5 is not stably k-rational by Algorithm 4.1 (2)
[ [ 2, -1, 0, 3, 0, 1, 0, -1, -2 ] ]
gap> PossibilityOfStablyPermutationF(Norm1TorusJ(14,16)); 
# 14T16 in not stably k-rational by Algorithm 4.1 (2)
[ [ 1, 0, 0, 0, 0, 0, -2, -1, -3, -4, -2, 0, 0, -3, 0, 2, 2, 0, 1, 2, 2, 1, -4, 4 ], 
  [ 0, 1, 0, 0, 0, 0, -1, 0, -1, -1, 0, 0, 0, -1, 0, 1, 0, 1, 0, 1, 1, 0, -1, 0 ], 
  [ 0, 0, 1, 0, 0, 0, -1, -1, -1, -2, -1, 0, 0, -1, 0, 1, 1, -1, 1, 1, 0, 0, -1, 2 ], 
  [ 0, 0, 0, 1, 0, 0, -1, 0, -1, 0, 0, 0, 0, 0, 0, 1, 0, 1, -1, -1, 1, 1, -1, 0 ], 
  [ 0, 0, 0, 0, 1, 0, 1, -1, 0, -4, -2, 0, 0, -3, -1, -1, 2, -3, 2, 3, -1, 0, -1, 4 ] ]

gap> J:=Norm1TorusJ(14,3);
<matrix group with 2 generators>
gap> StructureDescription(J);
"D28"
gap> IsInvertibleF(J); # 14T3 is retract k-rational
true
gap> T:=TransitiveGroup(14,3);
D(7)[x]2
gap> F:=FlabbyResolutionLowRankFromGroup(J,T).actionF; 
<matrix group with 2 generators>
gap> Rank(F.1); # F is of rank 17
17
gap> F2:=FlabbyResolutionLowRankFromGroup(F,T).actionF; 
# 14T3 is stably k-rational because [F]^fl=0
Group([ [ [ 1, 0 ], [ 0, 1 ] ], [ [ 1, 0 ], [ 0, 1 ] ] ])

gap> IsInvertibleF(Norm1TorusJ(14,6)); # 14T6 is not retract k-rational
false
gap> IsInvertibleF(Norm1TorusJ(14,8)); # 14T8 is not retract k-rational
false
gap> IsInvertibleF(Norm1TorusJ(14,10)); # 14T10 is not retract k-rational
false
gap> IsInvertibleF(Norm1TorusJ(14,12)); # 14T12 is not retract k-rational
false
gap> IsInvertibleF(Norm1TorusJ(14,26)); # 14T26 is not retract k-rational
false
gap> IsInvertibleF(Norm1TorusJ(14,30)); # 14T30 is not retract k-rational
false
\end{verbatim}
~\\
\end{example}

\begin{example}[Computations for $15Tm\leq S_{15}$]\label{ex15} 
~\\
\begin{verbatim}
gap> Read("FlabbyResolutionFromBase.gap");

gap> J:=Norm1TorusJ(15,5);
<matrix group with 2 generators>
gap> StructureDescription(J);
"A5"
gap> IsInvertibleF(J); # 15T5 is retract k-rational
true
gap> T:=TransitiveGroup(15,5);
A_5(15)
gap> F:=FlabbyResolutionLowRankFromGroup(J,T).actionF;
<matrix group with 2 generators>
gap> Rank(F.1); # F is of rank 21
21
gap> F2:=FlabbyResolutionLowRankFromGroup(F,T).actionF;
# 15T5 is stably k-rational because [F]^fl=0
Group([ [ [ 1 ] ], [ [ 1 ] ] ])

gap> J:=Norm1TorusJ(15,23);
<matrix group with 3 generators>
gap> StructureDescription(J);
"A5 x S3"
gap> IsInvertibleF(J); # 15T23 is retract k-rational
true
gap> T:=TransitiveGroup(15,23);
A(5)[x]S(3)
gap> F:=FlabbyResolutionLowRankFromGroup(J,T).actionF;
<matrix group with 3 generators>
gap> Rank(F.1); # F is of rank 27
27
gap> F2:=FlabbyResolutionLowRankFromGroup(F,T).actionF;
<matrix group with 3 generators>
gap> Rank(F2.1); # [F]^fl is of rank 8
8
gap> F3:=FlabbyResolutionLowRankFromGroup(F2,T).actionF;
# 15T23 is stably k-rational because [[F]^fl]^fl=0
Group([ [ [ 1 ] ], [ [ 1 ] ], [ [ 1 ] ] ])

gap> IsInvertibleF(Norm1TorusJ(15,10)); # 15T10 is retract k-rational
true
gap> PossibilityOfStablyPermutationF(Norm1TorusJ(15,10)); 
# 15T10 is not stably k-rational by Algorithm 4.1 (2) 
[ [ 1, 0, 0, 0, 0, 0, 8, 1, -2, 5, -3, 2, 2, 5, 0, -8, -10, -3, 8, -2 ], 
  [ 0, 1, 0, 0, 0, -1, -1, 0, 0, -1, 0, 0, 0, 0, 1, 1, 1, 0, -1, 0 ], 
  [ 0, 0, 1, 0, 0, 0, -2, 0, 0, -1, 1, 0, -1, -1, 0, 2, 2, 1, -2, 0 ], 
  [ 0, 0, 0, 1, 0, 0, 12, 1, -3, 7, -5, 2, 3, 6, 0, -12, -14, -4, 12, -2 ], 
  [ 0, 0, 0, 0, 1, 2, -2, 0, 1, -2, 2, -2, -1, -2, -2, 2, 4, 1, -2, 0 ] ]

gap> IsInvertibleF(Norm1TorusJ(15,11)); # 15T11 is retract k-rational
true
gap> IsInvertibleF(Norm1TorusJ(15,22)); # 15T22 is retract k-rational
true
gap> IsInvertibleF(Norm1TorusJ(15,24)); # 15T24 is retract k-rational
true
gap> IsInvertibleF(Norm1TorusJ(15,29)); # 15T29 is retract k-rational
true

gap> IsInvertibleF(Norm1TorusJ(15,9)); # 15T9 is not retract k-rational
false
gap> IsInvertibleF(Norm1TorusJ(15,15)); # 15T15 is not retract k-rational
false
gap> IsInvertibleF(Norm1TorusJ(15,20)); # 15T20 is not retract k-rational
false
gap> IsInvertibleF(Norm1TorusJ(15,26)); # 15T26 is not retract k-rational
false
\end{verbatim}
\end{example}
%%%%%%%%%%%%%%%%%%%%%%%%%%%%%%%%%%%%%%%%%%%%
\section{Proof of Theorem {\ref{thmain3}}, Theorem {\ref{thmain4}} and {Theorem \ref{thmain5}}}\label{seProof34}

{\it Proof of Theorem \ref{thmain3}.} 
We may assume that 
$H$ is the stabilizer of one of the letters in $G$ 
(see the first paragraph of Section \ref{seProof2}).

Step 1. 
It is enough to show that $F=[J_{G/H}]^{fl}$ is not invertible 
for $G={\rm PSL}_2(\bF_q)$ because 
$\PSL_2(\bF_q)\leq G\leq {\rm P\Gamma L}_2(\bF_q)$ 
and Lemma \ref{lemp3} (ii). 
The group 
$G={\rm PSL}_2(\bF_q)$ acts on $\bP^1(\bF_q)=\bF_q\cup\{\infty\}$ 
via linear fractional transformation. 
Let $\bF_q^\times=\langle u\rangle$. 
Then $\bP^1(\bF_q)=\bF_q^\times\cup\{0\}\cup\{\infty\}$ 
and 
$\bF_q^\times=\{1,-1,\sqrt{-1},-\sqrt{-1},u^i,-u^i,u^{-i},-u^{-i}\mid 1\leq i\leq \frac{q-5}{4}\}$ because $q\equiv 1\pmod{4}$.

Step 2. 
Take a subgroup $V_4=\langle\sigma,\tau\rangle\simeq C_2\times C_2
\leq G={\rm PSL}_2(\bF_q)$ as 
\begin{align*}
\sigma=\begin{pmatrix} \sqrt{-1} & 0 \\ 0 & -\sqrt{-1} \end{pmatrix},\ 
\tau=\begin{pmatrix} 0 & -1 \\ 1 & 0 \end{pmatrix}.
\end{align*}
The action of $V_4=\langle\sigma,\tau\rangle$ 
on $\bP^1(\bF_q)$ is given as 
$\sigma:x\mapsto -x$ and $\tau:x\mapsto -1/x$. 
This action induces the action of $V_4$ on $J_{G/H}$ given by 
\begin{align*}
&\sigma: e_1 \leftrightarrow e_{-1},\ 
e_{\sqrt{-1}} \leftrightarrow e_{-\sqrt{-1}},\ 
e_{u^i} \leftrightarrow e_{-u^i},\ 
e_{-u^{-i}} \leftrightarrow e_{u^{-i}},\ 
e_0 \mapsto e_0,\ 
e_\infty\mapsto e_\infty,\\
&\tau:e_1 \leftrightarrow e_{-1},\ 
e_{\pm \sqrt{-1}} \mapsto e_{\pm \sqrt{-1}},\ 
e_{u^i} \leftrightarrow e_{-u^{-i}},\ 
e_{-u^i} \leftrightarrow e_{u^{-i}},\ 
e_0 \leftrightarrow e_\infty,\\
&\sigma\tau:e_{\pm 1} \mapsto e_{\pm 1},\ 
e_{\sqrt{-1}} \leftrightarrow e_{-\sqrt{-1}},\ 
e_{u^i} \leftrightarrow e_{u^{-i}},\ 
e_{-u^i} \leftrightarrow e_{-u^{-i}},\ 
e_0 \leftrightarrow e_\infty
\end{align*}
where 
$B=\{e_1,e_{-1},e_{\sqrt{-1}},e_{-\sqrt{-1}},e_{u^i},e_{-u^i},e_{u^{-i}},e_{-u^{-i}},e_0\mid 1\leq i\leq \frac{q-5}{4}\}$ is a $\bZ$-basis of $J_{G/H}$ and 
\[
e_\infty:=-\sum_{j\in\bF_{q}} e_j.
\]

By Lemma \ref{lemp3} (ii), 
we should show that $[M]^{fl}$ is not invertible 
where $M=J_{G/H}|_{V_4}$ is a $V_4$-lattice with ${\rm rank}_\bZ(M)=q=n-1$.

Step 3. 
We will construct a coflabby resolution 
$0\rightarrow F^\circ\rightarrow P^\circ\rightarrow M^\circ\rightarrow 0$ 
where $P^\circ$ is permutation $V_4$-lattice 
and $F^\circ$ is coflabby $V_4$-lattice with ${\rm rank}_\bZ(F^\circ)=5$.

Step 3-1. 
The actions of $\sigma$ and $\tau$ on $M$ are 
represented as matrices 
\begin{align*}
&\left(
\begin{array}{cccc|cccc|ccc|c}
0 & 1 &&&&&&&&&\\
1 & 0 &&&&&&&&&\\
&& 0 & 1 &&&&&&&\\
&& 1 & 0 &&&&&&&\\\hline
&&&& 0 & 1 & 0 & 0&&&\\
&&&& 1 & 0 & 0 & 0&&&\\
&&&& 0 & 0 & 0 & 1&&&\\
&&&& 0 & 0 & 1 & 0&&&\\\hline
&&&&&&&&\ddots&&\\
&&&&&&&&&\ddots&&\\
&&&&&&&&&&\ddots&\\\hline
&&&&&&&&&&&1
\end{array}
\right),\\
&\left(
\begin{array}{cccc|cccc|ccc|c}
0 & 1 &&&&&&&&&&\\
1 & 0 &&&&&&&&&&\\
&& 1 & 0 &&&&&&&\\
&& 0 & 1 &&&&&&&\\\hline
&&&& 0 & 0 & 0 & 1&&&\\
&&&& 0 & 0 & 1 & 0&&&\\
&&&& 0 & 1 & 0 & 0&&&\\
&&&& 1 & 0 & 0 & 0&&&\\\hline
&&&&&&&&\ddots&&\\
&&&&&&&&&\ddots&&\\
&&&&&&&&&&\ddots&\\\hline
\!-1\!&\!-1\!&\!-1\!&\!-1\!&\!\-1\!&\!-1\!&\!-1\!&\!-1\!&\!-1\!&\cdots&\!-1\!&\!-1\!
\end{array}
\right).
\end{align*}
Let $B^\ast=\{e^\ast_1,e^\ast_{-1},e^\ast_{\sqrt{-1}},e^\ast_{-\sqrt{-1}},e^\ast_{u^i},e^\ast_{-u^i},e^\ast_{u^{-i}},e^\ast_{-u^{-i}},e^\ast_0\mid 1\leq i\leq \frac{q-5}{4}\}$ 
be the dual basis of $B$. 
By the definition, $B^\ast$ is a $\bZ$-basis of the $G$-lattice $I_{G/H}=(J_{G/H})^\circ$. 
The action of $V_4=\langle\sigma,\tau\rangle$ on $M^\circ$ is given by 
\begin{align*}
&\sigma: e^\ast_1 \leftrightarrow e^\ast_{-1},\ 
e^\ast_{\sqrt{-1}} \leftrightarrow e^\ast_{-\sqrt{-1}},\ 
e^\ast_{u^i} \leftrightarrow e^\ast_{-u^i},\ 
e^\ast_{-u^{-i}} \leftrightarrow e^\ast_{u^{-i}},\ 
e^\ast_0 \mapsto e^\ast_0,\\
&\tau:e^\ast_1 \leftrightarrow e^\ast_{-1}-e^\ast_0,\ 
e^\ast_{\pm \sqrt{-1}} \leftrightarrow e^\ast_{\pm \sqrt{-1}}-e^\ast_0,\ 
e^\ast_{u^i} \leftrightarrow e^\ast_{-u^{-i}}-e^\ast_0,\ 
e^\ast_{-u^i} \leftrightarrow e^\ast_{u^{-i}}-e^\ast_0,\ 
e_0^\ast\mapsto -e^\ast_0,\\
&\sigma\tau:e^\ast_{\pm 1} \leftrightarrow e^\ast_{\pm 1}-e^\ast_0,\ 
e^\ast_{\pm\sqrt{-1}} \leftrightarrow e^\ast_{\mp\sqrt{-1}}-e^\ast_0,\ 
e^\ast_{u^i} \leftrightarrow e^\ast_{u^{-i}}-e^\ast_0,\ 
e^\ast_{-u^i} \leftrightarrow e^\ast_{-u^{-i}}-e^\ast_0,\ 
e_0^\ast\mapsto -e^\ast_0
\end{align*}
(this action corresponds to the transposed matrices of the above matrices).

We define the permutation $V_4$-lattice $P^\circ$ 
of ${\rm rank}_\bZ(P^\circ)=q+5=n+4$
with $\bZ$-basis 
\begin{align*}
&v_1:=v(e^\ast_1),\ 
v_2:=v(e^\ast_{-1}),\ 
v_3:=v(e^\ast_1-e^\ast_0), 
v_4:=v(e^\ast_{-1}-e^\ast_0),\ 
v_5:=v(e^\ast_1+e^\ast_{-1}-e^\ast_0),\\
&v_6:=v(e^\ast_{\sqrt{-1}}), 
v_7:=v(e^\ast_{-\sqrt{-1}}),
v_8:=v(e^\ast_{\sqrt{-1}}-e^\ast_0),
v_9:=v(e^\ast_{-\sqrt{-1}}-e^\ast_0),
v_{10}:=v(e^\ast_{\sqrt{-1}}+e^\ast_{-\sqrt{-1}}-e^\ast_0),\\
&v_{i,1}:=v(e^\ast_{u^i}), 
v_{i,2}:=v(e^\ast_{-u^i}),
v_{i,3}:=v(e^\ast_{u^{-i}}-e^\ast_0),
v_{i,4}:=v(e^\ast_{-u^{-i}}-e^\ast_0)\quad 
(1\leq i\leq \tfrac{q-5}{4})
\end{align*}
where $V_4$ acts on $P^\circ$ 
by $g(v(m^\ast))=v(g(m^\ast))$ $(m^\ast\in M^\circ, g\in V_4)$: 
\begin{align*}
\sigma:&\ v_1\leftrightarrow v_2,\ 
v_3\leftrightarrow v_4,\ 
v_5\mapsto v_5,\ 
v_6\leftrightarrow v_7,\ 
v_8\leftrightarrow v_9,\ 
v_{10}\mapsto v_{10},\ 
v_{i,1}\leftrightarrow v_{i,2},\ 
v_{i,3}\leftrightarrow v_{i,4},\\
\tau:&\ v_1\leftrightarrow v_4,\ 
v_2\leftrightarrow v_3,\ 
v_5\mapsto v_5,\ 
v_6\leftrightarrow v_9,\ 
v_7\leftrightarrow v_8,\ 
v_{10}\mapsto v_{10},\ 
v_{i,1}\leftrightarrow v_{i,4},\ 
v_{i,2}\leftrightarrow v_{i,3},\\
\sigma\tau:&\ v_1\leftrightarrow v_3,\ 
v_2\leftrightarrow v_4,\ 
v_5\mapsto v_5,\ 
v_6\leftrightarrow v_8,\ 
v_7\leftrightarrow v_9,\ 
v_{10}\mapsto v_{10},\ 
v_{i,1}\leftrightarrow v_{i,3},\ 
v_{i,2}\leftrightarrow v_{i,4}.
\end{align*}

Step 3-2. 
We define a $V_4$-homomorphism 
$f:P^\circ\rightarrow M^\circ$, $v(m^\ast)\mapsto m^\ast$ 
$(m^\ast\in M^\circ)$. 
Then $f$ is surjective. 
We define a $V_4$-lattice $F^\circ$ as $F^\circ={\rm Ker}(f)$. 
Then we obtain an exact sequence 
$0\rightarrow F^\circ\rightarrow P^\circ\rightarrow M^\circ\rightarrow 0$ 
with ${\rm rank}_\bZ(F^\circ)=5$.

Step 3-3. 
We will check that $F^\circ$ is coflabby. 
In order to prove this assertion, 
we should check that 
$\widetilde{f}=f|_{H^0(W,P^\circ)}:H^0(W,P^\circ)\rightarrow H^0(W,M^\circ)$ 
is surjective (hence $H^1(W,F^\circ)=0$) for any $W\leq V_4$ 
where $H^0(W,P^\circ)=\widehat{Z}^0(W,P^\circ)=(P^\circ)^W$ 
(see also \cite[Chapter 2]{HY17}). 

Step 3-3-1. $W=V_4=\langle\sigma,\tau\rangle$. 
By the orbit decomposition of the action of $V_4$ on $P^\circ$, 
\begin{align*}
\{v_1+v_2+v_3+v_4,v_5,v_6+v_7+v_8+v_9,v_{10},v_{i,1}+v_{i,2}+v_{i,3}+v_{i,4}\mid 1\leq i\leq \tfrac{q-5}{4}\}
\end{align*}
is a $\bZ$-basis of $(P^\circ)^{V_4}$. 
We also see that 
\begin{align*}
\{e^\ast_1+e^\ast_{-1}-e^\ast_0,
e^\ast_{\sqrt{-1}}+e^\ast_{-\sqrt{-1}}-e^\ast_0,
e^\ast_{u^i}+e^\ast_{-u^i}+e^\ast_{u^{-i}}+e^\ast_{-u^{-i}}
-2e^\ast_0\mid 1\leq i\leq \tfrac{q-5}{4}\}
\end{align*}
is a $\bZ$-basis of 
$(M^\circ)^{V_4}$. 
Hence $\widetilde{f}$ is surjective because 
\begin{align*}
\widetilde{f}:\ &v_1+v_2+v_3+v_4\mapsto 2(e^\ast_1+e^\ast_{-1}-e^\ast_0),\ 
v_5\mapsto e^\ast_1+e^\ast_{-1}-e^\ast_0,\\ 
&v_6+v_7+v_8+v_9\mapsto 2(e^\ast_{\sqrt{-1}}+e^\ast_{-\sqrt{-1}}-e^\ast_0),\ 
v_{10}\mapsto e^\ast_{\sqrt{-1}}+e^\ast_{-\sqrt{-1}}-e^\ast_0,\\
&v_{i,1}+v_{i,2}+v_{i,3}+v_{i,4}\mapsto e^\ast_{u^i}+e^\ast_{-u^i}+e^\ast_{u^{-i}}+e^\ast_{-u^{-i}}
-2e^\ast_0\quad (1\leq i\leq \tfrac{q-5}{4}).
\end{align*}

Step 3-3-2. $W=\langle\sigma\rangle$. 
The set 
\begin{align*}
\{v_1+v_2,v_3+v_4,v_5,v_6+v_7,v_8+v_9,v_{10},v_{i,1}+v_{i,2},v_{i,3}+v_{i,4}\mid 1\leq i\leq \tfrac{q-5}{4}\}
\end{align*}
becomes a $\bZ$-basis 
of $(P^\circ)^{\langle\sigma\rangle}$ and 
\begin{align*}
\{e^\ast_1+e^\ast_{-1},
e^\ast_{\sqrt{-1}}+e^\ast_{-\sqrt{-1}},
e^\ast_{u^i}+e^\ast_{-u^i},e^\ast_{u^{-i}}+e^\ast_{-u^{-i}},e^\ast_0\mid 1\leq i\leq \tfrac{q-5}{4}\}
\end{align*}
is a $\bZ$-basis of $(M^\circ)^{\langle\sigma\rangle}$. 
Hence $\widetilde{f}$ is surjective because 
\begin{align*}
\widetilde{f}:\ &v_1+v_2\mapsto e^\ast_1+e^\ast_{-1}, 
v_5\mapsto e^\ast_1+e^\ast_{-1}-e^\ast_0,\ 
v_6+v_7\mapsto e^\ast_{\sqrt{-1}}+e^\ast_{-\sqrt{-1}},\\
&v_{i,1}+v_{i,2}\mapsto e^\ast_{u^i}+e^\ast_{-u^i},\ 
v_{i,3}+v_{i,4}\mapsto e^\ast_{u^{-i}}+e^\ast_{-u^{-i}}
-2e^\ast_0\quad (1\leq i\leq \tfrac{q-5}{4}).
\end{align*}

Step 3-3-3. $W=\langle\tau\rangle$. 
The set 
\begin{align*}
\{v_1+v_4,v_2+v_3,v_5,v_6+v_8,v_7+v_9,v_{10},v_{i,1}+v_{i,4},v_{i,2}+v_{i,3}\mid 1\leq i\leq \tfrac{q-5}{4}\}
\end{align*}
becomes a $\bZ$-basis of $(P^\circ)^{\langle\tau\rangle}$ and 
\begin{align*}
\{e^\ast_1+e^\ast_{-1}-e^\ast_0,
e^\ast_{\sqrt{-1}}+e^\ast_{-\sqrt{-1}}-e^\ast_0, 
2e^\ast_{-\sqrt{-1}}-e^\ast_0, 
e^\ast_{u^i}+e^\ast_{-u^{-i}}-e^\ast_0, 
e^\ast_{u^{-i}}+e^\ast_{-u^{i}}-e^\ast_0\mid 1\leq i\leq \tfrac{q-5}{4}\}
\end{align*}
is a $\bZ$-basis of $(M^\circ)^{\langle\tau\rangle}$. 
Hence $\widetilde{f}$ is surjective because 
\begin{align*}
\widetilde{f}:\ &v_5\mapsto e^\ast_1+e^\ast_{-1}-e^\ast_0,\ 
v_7+v_9\mapsto 2e^\ast_{-\sqrt{-1}}-e^\ast_0,\ 
v_{10}\mapsto e^\ast_{\sqrt{-1}}+e^\ast_{-\sqrt{-1}}-e^\ast_0,\\
&v_{i,1}+v_{i,4}\mapsto e^\ast_{u^i}+e^\ast_{-u^{-i}}-e^\ast_0,\ 
v_{i,2}+v_{i,3}\mapsto e^\ast_{-u^i}+e^\ast_{u^{-i}}-e^\ast_0\quad (1\leq i\leq \tfrac{q-5}{4}).
\end{align*}

Step 3-3-4. $W=\langle\sigma\tau\rangle$. 
The set 
\begin{align*}
\{v_1+v_3,v_2+v_4,v_5,v_6+v_9,v_7+v_8,v_{10},v_{i,1}+v_{i,3},v_{i,2}+v_{i,4}\mid 1\leq i\leq \tfrac{q-5}{4}\}
\end{align*}
becomes a $\bZ$-basis of $(P^\circ)^{\langle\sigma\tau\rangle}$ and 
\begin{align*}
\{e^\ast_1+e^\ast_{-1}-e^\ast_0, 2e^\ast_{-1}-e^\ast_0, 
e^\ast_{\sqrt{-1}}+e^\ast_{-\sqrt{-1}}-e^\ast_0, 
e^\ast_{u^i}+e^\ast_{u^{-i}}-e^\ast_0, 
e^\ast_{-u^i}+e^\ast_{-u^{-i}}-e^\ast_0\mid 1\leq i\leq \tfrac{q-5}{4}\}
\end{align*}
is a $\bZ$-basis of 
$(M^\circ)^{\langle\sigma\tau\rangle}$. 
Hence $\widetilde{f}$ is surjective because 
\begin{align*}
\widetilde{f}:\ &v_5\mapsto e^\ast_1+e^\ast_{-1}-e^\ast_0,\ 
v_2+v_4\mapsto 2e^\ast_{-1}-e^\ast_0,\ 
v_{10}\mapsto e^\ast_{\sqrt{-1}}+e^\ast_{-\sqrt{-1}}-e^\ast_0,\\
&v_{i,1}+v_{i,3}\mapsto e^\ast_{u^i}+e^\ast_{u^{-i}}-e^\ast_0,\ 
v_{i,2}+v_{i,4}\mapsto e^\ast_{-u^i}+e^\ast_{-u^{-i}}-e^\ast_0\quad (1\leq i\leq \tfrac{q-5}{4}).
\end{align*}

Step 4. We will prove that $F$ is not invertible. 
By Step 3, we have an exact sequence 
$0\rightarrow F^\circ\rightarrow P^\circ\rightarrow M^\circ\rightarrow 0$ 
where $P^\circ$ is permutation $V_4$-lattice 
and $F^\circ$ is coflabby $V_4$-lattice with ${\rm rank}_\bZ(F^\circ)=5$. 

The set $\{w_1,w_2,w_3,w_4,w_5\}$ becomes a $\bZ$-basis of $F^\circ$ where 
\begin{align*}
&w_1=v_1+v_4-v_5,\ 
w_2=v_2-v_4+v_8+v_9-v_{10},\ 
w_3=v_3+v_4-v_5-v_8-v_9+v_{10},\\
&w_4=v_6+v_9-v_{10},\ 
w_5=v_7+v_8-v_{10}.
\end{align*} 
The actions of $\sigma$ and $\tau$ on $F^\circ$ are 
given by 
\begin{align*}
&\sigma: 
w_1\mapsto w_2+w_3,\ 
w_2\mapsto w_1-w_3,\ 
w_3\mapsto w_3,\ 
w_4\leftrightarrow w_5,\\
&\tau: 
w_1\mapsto w_1,\ 
w_2\mapsto -w_1+w_3+w_4+w_5,\ 
w_3\mapsto w_1+w_2-w_4-w_5,\ 
w_4\leftrightarrow w_5
\end{align*}
and they are represented as matrices 
\begin{align*}
\left(
\begin{array}{ccccc}
 0 & 1 & 1 & 0 & 0 \\
 1 & 0 &-1 & 0 & 0 \\
 0 & 0 & 1 & 0 & 0 \\
 0 & 0 & 0 & 0 & 1 \\
 0 & 0 & 0 & 1 & 0
\end{array}
\right),
\left(
\begin{array}{ccccc}
 1 & 0 & 0 & 0 & 0 \\
-1 & 0 & 1 & 1 & 1 \\
 1 & 1 & 0 &-1 &-1 \\
 0 & 0 & 0 & 0 & 1 \\
 0 & 0 & 0 & 1 & 0
\end{array}
\right).
\end{align*}

By taking the dual, we get the flabby resolution 
$0 \rightarrow M \rightarrow P \rightarrow F \rightarrow 0$
of $M$ and the actions of $\sigma$ and $\tau$ on $F$ are 
represented as the following matrices (transposed matrices of the above): 
\begin{align*}
S=
\left(
\begin{array}{ccccc}
0 & 1 & 0 & 0 & 0 \\
1 & 0 & 0 & 0 & 0 \\
1 &-1 & 1 & 0 & 0 \\
0 & 0 & 0 & 0 & 1 \\
0 & 0 & 0 & 1 & 0
\end{array}
\right),\ 
T=
\left(
\begin{array}{ccccc}
1 &-1 & 1 & 0 & 0 \\
0 & 0 & 1 & 0 & 0 \\
0 & 1 & 0 & 0 & 0 \\
0 & 1 &-1 & 0 & 1 \\
0 & 1 &-1 & 1 & 0
\end{array}
\right).
\end{align*}

In order to obtain $H^1(V_4,F)$, 
we should evaluate the elementary divisors of 
\begin{align*}
(S-I\mid T-I)=
\left(
\begin{array}{ccccc|ccccc}
-1 & 1 & 0 & 0 & 0 & 0 &-1 & 1 & 0 & 0 \\
 1 &-1 & 0 & 0 & 0 & 0 &-1 & 1 & 0 & 0 \\
 1 &-1 & 0 & 0 & 0 & 0 & 1 &-1 & 0 & 0 \\
 0 & 0 & 0 &-1 & 1 & 0 & 1 &-1 &-1 & 1 \\
 0 & 0 & 0 & 1 &-1 & 0 & 1 &-1 & 1 &-1
\end{array}
\right)
\end{align*}
where $I$ is the $5\times 5$ identity matrix. 
Multiply the regular matrix 
\begin{align*}
Q=
\left(
\begin{array}{ccccc}
-1 & 0 & 0 & 0 & 0 \\
 0 & 0 & 0 & 0 & 1 \\
 0 &-1 & 1 & 0 & 0 \\
 1 & 0 & 1 & 0 & 0 \\
 1 & 1 & 0 & 1 & 1
\end{array}
\right)
\end{align*}
from the left, we have 
\begin{align*}
Q\,(S-I\mid T-I)=
\left(
\begin{array}{ccccc|ccccc}
 1 &-1 & 0 & 0 & 0 & 0 & 1 &-1 & 0 & 0 \\
 0 & 0 & 0 & 1 &-1 & 0 & 1 &-1 & 1 &-1 \\
 0 & 0 & 0 & 0 & 0 & 0 & 2 &-2 & 0 & 0 \\
 0 & 0 & 0 & 0 & 0 & 0 & 0 & 0 & 0 & 0 \\
 0 & 0 & 0 & 0 & 0 & 0 & 0 & 0 & 0 & 0
\end{array}
\right).
\end{align*}
Hence we conclude that $H^1(V_4,F)=\bZ/2\bZ$. 
This implies that $F$ is not invertible. \qed\\

%%%%%%%%%%%%%%%%%%%%%%%%%%%%%%%%%%%%%%%%%%%%%%

Let $p$ be a prime number and 
$G \leq S_{2p}$ be a primitive %transitive 
subgroup. 
Wielandt (\cite{Wie56}, \cite{Wie64}) 
proved that $G$ is doubly transitive if $2p-1$ is not a perfect square. 
Using the classification of finite simple groups, 
all doubly transitive finite groups are known 
(see Cameron \cite[Theorem 5.3]{Cam81} and also Dixon and Mortimer 
\cite[Section 7.7]{DM96}). 
On the other hand, by 
O'Nan-Scott theorem (see Liebeck, Praeger and Saxl \cite{LPS88}), 
$G$ must be almost simple, i.e. $S\leq G\leq {\rm Aut}(S)$ 
for some non-abelian simple group $S$. 
The socle ${\rm soc}(G)\lhd G$ of a group $G$ was classified by 
Liebeck and Saxl \cite[Theorem 1.1 (i), (iii)]{LS85}.
\begin{theorem}[{Liebeck and Saxl \cite[Corollary 1.2]{LS85}, see also \cite[Theorem 4.6]{Sha97}, \cite[Proposition 5.5]{DJ13}}]
Let $p$ be a prime number and $G \leq S_{2p}$ be a primitive %transitive 
subgroup. 
Then $G$ is one of the following:\\
{\rm (i)} $G=S_{2p}$ or $G=A_{2p}\leq S_{2p}$;\\
{\rm (ii)} $G=S_5\leq S_{10}$ or $G=A_5\leq S_{10}$;\\
{\rm (iii)} $G=M_{22}\leq S_{22}$ 
or $G=\Aut(M_{22})\simeq M_{22}\rtimes C_2\leq S_{22}$ 
where $M_{22}$ is the Mathieu group of degree $22$;\\ 
{\rm (iv)} $\PSL_2(\bF_q)\leq G\leq {\rm P\Gamma L}_2(\bF_q)\simeq 
\PGL_2(\bF_q)\rtimes C_e$
%{\rm Gal}(\bF_q/\bF_l)$ 
%where $2p=q+1$ and $q=l^{2^a}$ for some odd prime number $l$ and $a\geq 1$.
where $2p=q+1$ and $q=l^e$ is an odd prime power.\\
\end{theorem}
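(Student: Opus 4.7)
The plan is to derive this classification from two deep inputs: Wielandt's theorem on primitive groups of degree $2p$ (stated just above), and the classification of finite $2$-transitive groups via CFSG (Cameron \cite[Theorem 5.3]{Cam81}), together with the O'Nan--Scott theorem to handle any primitive but non-$2$-transitive residue.

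First I would split on whether $2p-1$ is a perfect square. If it is not, Wielandt's theorem forces $G\leq S_{2p}$ to be $2$-transitive and one proceeds via Cameron. If $2p-1=s^2$, then primitive non-$2$-transitive possibilities may in principle arise, but the O'Nan--Scott reduction (noting that a regular elementary abelian socle of order $2p$ would force $2p$ to be a prime power, while product-type and diagonal-type socles would require degrees of the form $n^k$ or $|T|^{k-1}$ incompatible with $2p$ for $p$ an odd prime) followed by the socle analysis of Liebeck--Saxl \cite[Theorem 1.1]{LS85} still confines $G$ to the announced list.

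In the $2$-transitive branch I would invoke Cameron's classification. The socle $N=\mathrm{soc}(G)$ is either elementary abelian (giving $|N|=2p$ a prime power, hence $p=2$ with socle $V_4$, absorbed into case (i) as $A_4, S_4$) or non-abelian simple. I would then traverse Cameron's list and retain only those families admitting a $2$-transitive action of degree $2p$ for a prime $p$: the natural action of $A_n$ with $n=2p$ yields (i); the degree-$10$ action of $A_5\cong \PSL_2(\bF_4)$ on unordered pairs yields (ii); $\PSL_2(\bF_q)$ on $q+1$ points with $q+1=2p$ yields (iv); and $M_{22}$ on $22=2\cdot 11$ points yields (iii). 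The remaining families (higher-dimensional $\PSL_d(\bF_q)$ on $(q^d-1)/(q-1)$ points, Suzuki groups on $q^2+1$ points, Ree groups on $q^3+1$ points, unitary groups $\mathrm{PSU}_3(\bF_q)$ on $q^3+1$ points, the symplectic families, and the other Mathieu groups $M_{11}, M_{12}, M_{23}, M_{24}$) must be ruled out by a Diophantine check that their degree formulas cannot equal $2p$ for a prime $p$. Finally, the outer-automorphism envelopes $A_{2p}\leq S_{2p}$, $M_{22}\leq \mathrm{Aut}(M_{22})$, and $\PSL_2(\bF_q)\leq G\leq \mathrm{P\Gamma L}_2(\bF_q)$ account for the upper bounds stated.

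The main obstacle is not a single calculation but the orchestration of the case analysis: one must verify, family by family, that the condition ``degree equals $2p$ with $p$ prime'' admits no unexpected sporadic solutions, and also verify that when $2p-1$ is a perfect square no extra primitive-but-not-$2$-transitive groups slip in outside (i)--(iv). Each individual Diophantine check is elementary, but the collective verification leans heavily on CFSG through both Cameron's list and the Liebeck--Saxl socle classification, so in practice the proof is primarily one of careful citation and book-keeping rather than self-contained construction.
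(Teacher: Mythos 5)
The paper does not actually prove this statement: it is quoted verbatim from Liebeck--Saxl \cite[Corollary 1.2]{LS85}, and the paragraph preceding it in the paper merely records the same chain of inputs you invoke (Wielandt for the non-square case, Cameron's CFSG list of doubly transitive groups, the O'Nan--Scott reduction, and the Liebeck--Saxl socle analysis). So your overall strategy coincides with the paper's, and as a citation-plus-bookkeeping argument it reaches the right list.

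There is, however, one concrete error in your case analysis: you place case (ii) in the $2$-transitive branch, asserting that ``the degree-$10$ action of $A_5\cong\PSL_2(\bF_4)$ on unordered pairs'' appears in Cameron's classification. It does not. The actions of $A_5$ and $S_5$ on the $10$ unordered pairs from $\{1,\dots,5\}$ are the rank-$3$ Petersen-graph actions; they cannot be $2$-transitive since $10\cdot 9=90$ does not divide $|A_5|=60$ or $|S_5|=120$. Case (ii) is precisely the exceptional output of the branch where $2p-1$ is a perfect square (here $2p-1=9=3^2$), i.e.\ the branch that Wielandt's theorem does not cover and for which the Liebeck--Saxl analysis is needed in the first place; the $2$-transitive groups of degree $10$ all lie in case (iv) via $\PSL_2(\bF_9)\leq G\leq{\rm P\Gamma L}_2(\bF_9)$ (note also that $Sp_4(2)\simeq S_6$ of degree $10$ folds into this family through $Sp_4(2)'\simeq A_6\simeq\PSL_2(\bF_9)$, so the symplectic family is not simply eliminated by a degree count). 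Since your second branch is in any case a citation of Liebeck--Saxl's Theorem 1.1, the conclusion survives, but as written the traversal of Cameron's list would fail to produce case (ii) and you would need to relocate it to the perfect-square branch.
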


{\it Proof of Theorem \ref{thmain4}.} 
We may assume that 
$H$ is the stabilizer of one of the letters in $G$ 
(see the first paragraph of Section \ref{seProof2}). 

(i) follows from 
%Theorem \ref{thS} (i) 
Cortella and Kunyavskii 
\cite[Proposition 0.2]{CK00}
and 
%Theorem \ref{thA} (i). 
Endo 
\cite[Theorem 5.2]{End11}. 

(ii) follows from Theorem \ref{thmain2} (1) 
because $S_5\simeq 10T13$ and $A_5\simeq 10T7$. 
%\cite[Theorem 1.11 (iv)]{HY}. 

For (iii), it is enough to show that $F=[J_{G/H}]^{fl}$ is not invertible 
for $G=M_{22}\leq S_{22}$. 
%By Algorithm \ref{AL} (1), 
%we obtain that $F=[J_{G/H}]^{fl}$ with ${\rm rank}_\bZ(F)=672$ 
We see that there exists $G^\prime\leq G$ such that $[J_{G/H}|_{G^\prime}]^{fl}$ is not invertible. 
Indeed, we can find such $G^\prime$ which is isomorphic to $(C_2)^3$, 
$Q_8$, $D_4$ or $C_4\times C_2$ (see Example \ref{ex22}). 
Hence it follows from Lemma \ref{lemp3} (ii) that $F$ is not invertible. 
This implies that $T$ is not retract $k$-rational 
by Theorem \ref{thEM73} (iii). %(see Example \ref{ex22}). 

For (iv), we may assume that $p\geq 3$ 
(if $p=2$, then $q=3$ and 
$\PSL_2(\bF_3)\simeq A_4$, $\PGL_2(\bF_3)\simeq S_4$, see (i)). 
%see also Theorem \ref{thS} and Theorem \ref{thA}). 
Then $q=2p-1\equiv 1\pmod{4}$ because $p$ is odd. 
Hence the assertion follows from Theorem \ref{thmain3} 
as a special case where $n=2p$ and $q=l^e$.\qed\\

%%%%%%%%%%%%%%%%%%%%%%%%%%%%%%%%%%%%%%%%%%%%%%%%%%%%%%%%%%%%%%%%%%%%
{\it Proof of Theorem \ref{thmain5}.} 
The assertion for $n=11$ and $n=23$ follows from 
\cite[Theorem 1.9 (6)]{HY}. 
The assertion for $n=12$ and $n=22$ follows from 
Theorem \ref{thmain2} (2)--(ii) 
and Theorem \ref{thmain4} (iii) respectively. 
%if $G={\rm Gal}(L/k)\leq S_n$ is the Mathieu group $M_n$ 
%and $H={\rm Gal}(K/K)$ is the stabilizer of one of the letters in $G$, 
%then the norm one torus 
%$R_{K/k}^{(1)}(\bG_m)$ is not retract $k$-rational. 
%We finally treat the last case $M_{24}$ of the Mathieu group:

Let $G=M_{24}$ be the Mathieu group of degree $24$. 
Then there exists $G^\prime\leq G\leq S_{24}$ which is transitive 
and isomorphic to $S_4$ (see Example \ref{ex22}). 
Then $[J_{G^\prime}]^{fl}$ is not invertible by 
%Theorem \ref{th13-1}.
Endo and Miyata 
\cite[Theorem 1.5]{EM75}. 
It follows from Lemma \ref{lemp3} (ii) that $[J_{G/H}]^{fl}$ 
is not invertible and hence 
$R_{K/k}^{(1)}(\bG_m)$ is not retract $k$-rational 
by Theorem \ref{thEM73} (iii).\qed\\

%%%%%%%%%%%%%%%%%%%%%%%%%%%%%%%%%%%%%%%%%%%%%%%%%%%%%%%%%%
\begin{example}[Computations for $22T38\simeq M_{22}\leq S_{22}$ and $M_{24}\leq S_{24}$]\label{ex22}
~\\
\begin{verbatim}
gap> Read("FlabbyResolutionFromBase.gap");

gap> JM22:=Norm1TorusJ(22,38);
<matrix group with 2 generators>
gap> StructureDescription(JM22);
"M22"
gap> M22:=TransitiveGroup(22,38);
gap> M22s:=List(ConjugacyClassesSubgroups2(M22),Representative);;
gap> JM22s:=ConjugacyClassesSubgroups2FromGroup(JM22,M22);;
gap> JM22s8:=Filtered(JM22s,x->Size(x)=8);;
gap> Length(JM22s8);
12
gap> JM22s8false:=Filtered(JM22s8,x->IsInvertibleF(x)=false);;
# [J_{G/H}|G']^fl is not invertible
gap> List(JM22s8false,StructureDescription);
[ "C2 x C2 x C2", "Q8", "D8", "C4 x C2" ]

gap> M24:=PrimitiveGroup(24,1);
M(24)
gap> M24s:=Filtered(List(ConjugacyClassesSubgroups2(M24),Representative),
> x->Length(Orbits(x,[1..24]))=1 and Size(x)=24);;
gap> M24s4:=Filtered(M24s,x->IdGroup(x)=[24,12]);;
gap> List(M24s4,StructureDescription);
[ "S4", "S4", "S4" ]
\end{verbatim}
\end{example}

%%%%%%%%%%%%%%%%%%%%%%%%%%%%%%%%%%%%%%%%%%%%%%%%%%%%%%%%%%%%%%%%%%%%%%%%%%

\end{document}